\documentclass[a4paper,12pt]{amsart}

\usepackage[headings]{fullpage}

\usepackage{amsfonts,graphics,amsmath,amsthm,amscd,amssymb,latexsym,euscript,enumerate}
\usepackage{epsfig}
\usepackage{flafter}
\usepackage[all,cmtip,line]{xy}
\usepackage{array}
\usepackage[english]{babel}
\usepackage{overpic}
\usepackage{subfig}
\usepackage{multirow}
\usepackage{microtype}
\usepackage{wrapfig}
\usepackage{tabularray}
\usepackage{longtable}
\usepackage{supertabular}
\usepackage{caption}
\usepackage{tikz}
\usetikzlibrary{positioning}
\usepackage{tikz-cd}
\usepackage{float}
\allowdisplaybreaks
\usepackage[dvipsnames,svgnames,table]{xcolor}
\usepackage{graphicx}
\usepackage{mathtools}
\usepackage{enumitem}
\usepackage{hyperref}
\hypersetup{
    colorlinks=true,
    linkcolor=blue,
    citecolor=blue,
    filecolor=blue,
    urlcolor=blue
}

\newtheorem{theorem}{Theorem}[section]
\newtheorem{lemma}[theorem]{Lemma}
\newtheorem{proposition}[theorem]{Proposition}

\newtheorem*{theorem*}{Theorem}

\theoremstyle{plain}
\newtheorem{corollary}[theorem]{Corollary}

\theoremstyle{definition}
\newtheorem{definition}[theorem]{Definition}
\newtheorem{definition-lemma}[theorem]{Definition-Lemma}

\numberwithin{equation}{section}

\newcommand{\Q}{\mathbb{Q}}
\newcommand{\OO}{\mathcal{O}}

\def\P{\mathbb{P}}
\newcommand{\A}{\mathbb{A}}
\newcommand{\F}{\mathbb{F}}
\newcommand{\G}{\mathbb{G}}

\DeclareMathOperator{\im}{Image}

\DeclareMathOperator{\Cl}{Cl}

\DeclareMathOperator{\Spec}{Spec}
\DeclareMathOperator{\Supp}{Supp}
\DeclareMathOperator{\SAut}{SAut}
\DeclareMathOperator{\Affcone}{Affcone}

\DeclareMathOperator{\Aut}{Aut}

\DeclarePairedDelimiterX{\norm}[1]{\lVert}{\rVert}{#1}

\title[Flexibility of affine cones over $S_m^n$]
{Flexibility of affine cones over blow-ups of weighted projective planes}

\begin{document}

\author[I.-K. Kim]{In-Kyun Kim}
\author[D.-W. Lee]{Dae-Won Lee}
\author[M. Sawahara]{Masatomo Sawahara}
\address[In-Kyun Kim]{June E Huh Center for Mathematical Challenges, Korea Institute for Advanced Study, 85 Hoegiro, Dongdaemun-gu, Seoul 02455, Republic of Korea}
\email{soulcraw@kias.re.kr}
\address[Dae-Won Lee]{School of Mathematics, Korea Institute for Advanced Study, 85 Hoegiro, Dongdaemun-gu, Seoul 02455, Republic of Korea}
\email{daewonlee@kias.re.kr}
\address[Masatomo Sawahara]{Faculty of Education, Hirosaki University, Bunkyocho 1, Hirosaki-shi, Aomori 036-8560, Japan}
\email{sawahara.masatomo@gmail.com}

\thanks{The first and second authors are supported by the National Research Foundation of Korea (No. RS-2025-00513064). The second author is partially supported by the Basic Science Research Program through the National Research Foundation of Korea (NRF), funded by the Ministry of Education (No. RS-2023-00237440 and 2021R1A6A1A10039823), and by the Samsung Science and Technology Foundation under Project No. SSTF-BA2302-03. The third author is supported by JSPS KAKENHI Grant No. JP24K22823 and JP25K17222.}

\subjclass[2020]{14R20, 14J26, 14J45, 14L30}
\date{\today}
\keywords{affine cones, flexibility, generic flexibility, additive group actions, weighted projective planes, Danielewski surfaces}

\begin{abstract}
Let $S_m^n$ be the surface obtained by blowing up $\P(1,1,m)$ at $n$ general smooth points, where $m\geq2$. For every very ample Cartier divisor $H$ on $S_m^n$, we prove that $\Affcone_H(S_m^n)$ is flexible when $n\leq m+1$. For $n=m+2$ and $n=m+3$, we prove that $\Affcone_H(S_m^n)$ is generically flexible. 
\end{abstract}

\maketitle

Throughout the paper, all varieties are defined over an algebraically closed field $\Bbbk$ of characteristic $0$.

\section{Introduction}\label{sect:intro}
Let $V$ be an affine variety. A point $v\in V_{\rm reg}$ is called \emph{flexible} if $T_vV$ is spanned by tangent vectors to orbits of one-parameter unipotent subgroups of $\Aut(V)$. The variety $V$ is called \emph{flexible} if every point of $V_{\rm reg}$ is flexible. We denote by $\SAut(V)$ the subgroup of $\Aut(V)$ generated by all one-parameter unipotent subgroups.

\begin{theorem}[{\cite[Theorem 0.1]{AFKKZ13}}]\label{thm:AFKKZ}
Let $V$ be an irreducible affine variety of dimension at least $2$. The following assertions are equivalent.
\begin{enumerate}[label=\textup{(\arabic*)}]
\item The variety $V$ is flexible.
\item The group $\SAut(V)$ acts transitively on $V_{\rm reg}$.
\item The group $\SAut(V)$ acts infinitely transitively on $V_{\rm reg}$.
\end{enumerate}
\end{theorem}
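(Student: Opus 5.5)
The equivalence is proved by establishing the cycle (3)$\Rightarrow$(2)$\Rightarrow$(1)$\Rightarrow$(3). The implication (3)$\Rightarrow$(2) is trivial. For (2)$\Rightarrow$(1), I use the structure of $\SAut(V)$ as a group generated by a family of connected algebraic subgroups (the $\G_a$-subgroups). Such a family is closed under conjugation, and by a Borel–Ramanujam type argument (the $\SAut$ analogue of the classical fact about subgroups generated by irreducible subvarieties through the identity), orbits of $G=\SAut(V)$ are locally closed. Moreover, for each orbit $O$ there are finitely many $\G_a$-subgroups $H_1,\dots,H_s$ with $O = H_1\cdots H_s\cdot v$ for $v\in O$. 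Differentiating the dominant morphism $H_1\times\cdots\times H_s\to O$, $(h_1,\dots,h_s)\mapsto h_1\cdots h_s v$, at a general point shows that $T_wO$ is spanned by tangent vectors to orbits of conjugates of the $H_i$ at $w$. If $G$ acts transitively on $V_{\rm reg}$, then $O=V_{\rm reg}$ is smooth, so $T_vO=T_vV$ at every point by homogeneity. Hence every $v$ is flexible.

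For (1)$\Rightarrow$(3), I first show (1)$\Rightarrow$(2). At a flexible point $v$, choose $\G_a$-subgroups $H_1,\dots,H_n$, $n=\dim V$, whose orbit tangent vectors span $T_vV$. The map $(t_1,\dots,t_n)\mapsto h_1(t_1)\cdots h_n(t_n)v$ is then étale at the origin, so the $G$-orbit of $v$ contains an open neighbourhood of $v$. Thus every $G$-orbit meeting $V_{\rm reg}$ is open in $V_{\rm reg}$. Since $V_{\rm reg}$ is irreducible, hence connected, and the orbits partition it into disjoint open sets, there is only one orbit.

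Next I upgrade transitivity to infinite transitivity by induction on $m$. Suppose $G$ is $m$-transitive on $V_{\rm reg}$; I show it is $(m+1)$-transitive. It suffices to show that the stabilizer $G_{x_1,\dots,x_m}$ of distinct points acts transitively on $V_{\rm reg}\setminus\{x_1,\dots,x_m\}$. The key tool is replacement of a locally nilpotent derivation $\partial$ by $f\partial$ with $f\in\ker\partial$. Such a replica still generates a $\G_a$-action, and if $f$ vanishes at $x_1,\dots,x_m$, the replica fixes these points. Its orbits coincide with those of $\partial$ wherever $f\neq0$. Because the invariants of a $\G_a$-action separate general orbits, I can choose $f\in\ker\partial$ vanishing on the orbits of $x_1,\dots,x_m$ but not on the orbit through a given point $y$, provided $y$ is not on those orbits. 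Conjugating by elements of $G$ and using that $\dim V\ge 2$ (so that one-dimensional orbits through the finitely many $x_i$ can be avoided), I show that the tangent vectors of these replicas still span $T_yV$ at each $y\in V_{\rm reg}\setminus\{x_i\}$. The open-orbit argument above then applies verbatim to $G_{x_1,\dots,x_m}$ on the connected set $V_{\rm reg}\setminus\{x_1,\dots,x_m\}$, which is connected since $\dim V\ge2$.

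The main obstacle is this last step: I must guarantee that, for every $y$, enough $\G_a$-subgroups have orbits through $y$ that avoid all the $x_i$ and still span $T_yV$. To arrange this, I would conjugate the spanning subgroups at $y$ by a generic element of $G$ close to the identity fixing $y$, which is available by the transitivity already established. This moves the orbits through $y$ off the finitely many points $x_i$ while preserving the spanning property, since that property is an open condition.
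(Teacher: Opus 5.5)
The paper gives no proof of this statement; it imports it from \cite[Theorem 0.1]{AFKKZ13}, so your proposal has to be judged on its own. Your implications (3)$\Rightarrow$(2), (2)$\Rightarrow$(1) and (1)$\Rightarrow$(2) are sound. The second uses the finite-product description of orbits, generic smoothness and conjugation invariance of the $\G_a$-subgroups; the third uses \'etaleness at the origin, openness of orbits and irreducibility of $V_{\rm reg}$.

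The gap is in the upgrade to infinite transitivity, at exactly the step you flag, and it has two parts.

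First, your separation claim is false as stated. A function $f\in\ker\partial$ with $f(x_i)=0$ for all $i$ and $f(y)\neq0$ exists if and only if, for each $i$, some element of $\ker\partial$ takes different values at $x_i$ and $y$ (sufficiency follows by prime avoidance in $\ker\partial$). This is strictly stronger than $y\notin\exp(\Bbbk\partial)\cdot x_i$. For example, on $U_P$ with $P$ square-free of degree at least $2$, the locally nilpotent derivation $u\partial_x+P'(x)\partial_v$ has kernel $\Bbbk[u]$. The components of $u^{-1}(0)$ are distinct orbits that no invariant separates. So ``moving the orbits through $y$ off the $x_i$'' is the wrong target.

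Second, the device you propose for reaching general position is not available. $\SAut(V)$ is not an algebraic group, so a ``generic element close to the identity fixing $y$'' has to come from an irreducible algebraic family inside the stabilizer $G_y$. The only way you have to produce such families is replicas vanishing at $y$, and that requires the very separation being sought. The transitivity already established does not supply it either. The $m$-transitivity hypothesis makes $G_y$ only $(m-1)$-transitive on $V_{\rm reg}\setminus\{y\}$, which is too weak to move $m$ points. In the base case $m=1$ it says nothing about $G_y$ at all.

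What is actually needed is a general-position lemma for pairs. For each $\partial_k$ in a spanning family at $y$ and each $i$, some $g$ in an irreducible family $H_1\cdots H_s$ containing the identity must make $g^{-1}x_i$ and $g^{-1}y$ separated by $\ker\partial_k$. You do not need $g$ to fix $y$. Conjugating all $\partial_k$ by the same $g$ keeps their values at $y$ spanning for $g$ in a nonempty open part of the family. Finitely many nonempty open conditions on an irreducible family are compatible.

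Once $G$ is $2$-transitive, this lemma is immediate: the orbit of $(x_i,y)$ is dense in $V\times V$, while the non-separation locus of $\partial_k$ is a proper closed subset. So your induction does go through for $m\geq2$. The base case, transitivity implies $2$-transitivity, needs a genuine argument that the $G$-orbit of a pair $(x,y)$ cannot lie entirely in the non-separation locus of $\partial_k$. Your proposal contains no such argument.

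A minor point: $V_{\rm reg}\setminus Z$ is irreducible in every dimension. The hypothesis $\dim V\geq2$ is really used so that $\ker\partial$ contains nonconstant functions.
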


For a normal projective variety $Y$ and a very ample Cartier divisor $H$, set
\[
\Affcone_H(Y)
\coloneqq
\Spec\left(\bigoplus_{d\geq0}H^0\bigl(Y,\OO_Y(dH)\bigr)\right).
\]

Micha{\l}ek--Perepechko--S\"u\ss{} proved a criterion for flexibility in terms of coverings by flexible $H$-polar affine open subsets \cite{MPS18}. Perepechko proved an analogous criterion for the existence of an open $\SAut$-orbit \cite{Per21}. We call an affine variety $V$ \emph{generically flexible} if $V_{\rm reg}$ contains a nonempty open subset consisting of flexible points. Equivalently, $V$ is generically flexible if $\SAut(V)$ has an open orbit in $V_{\rm reg}$. We call this property \emph{generic flexibility}.

For smooth del Pezzo surfaces, flexibility and generic flexibility of affine cones are known in several low degrees. Let $S$ be a smooth del Pezzo surface. When $(-K_S)^2=5$, Perepechko proved that the affine cone associated with every very ample divisor is flexible \cite{Per13}. When $(-K_S)^2=4$, flexibility for every ample polarization is proved in \cite{PW16}. When $(-K_S)^2=3$, every affine cone associated with a very ample divisor not proportional to $-K_S$ is generically flexible \cite{Per21}. For $(-K_S)^2=2$, generic flexibility was established in \cite{KP21} for the ample polarizations to which the polar cylinder configurations constructed there apply. These results provide the smooth counterpart to the singular del Pezzo surfaces studied below.

\begin{theorem}\label{thm:main}
Let $m\geq2$, let $S_m^n$ be obtained by blowing up $\P(1,1,m)$ at $n$ general smooth points, and let $H$ be a very ample Cartier divisor on $S_m^n$.
\begin{enumerate}[label=\textup{(\arabic*)}]
\item If $0\leq n\leq m+1$, then $\Affcone_H(S_m^n)$ is flexible.
\item If $n=m+2$, then $\Affcone_H(S_m^{m+2})$ is generically flexible.
\item If $n=m+3$, then $\Affcone_H(S_m^{m+3})$ is generically flexible.
\end{enumerate}
\end{theorem}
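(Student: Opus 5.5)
The plan is to use the Michałek--Perepechko--Süß criterion \cite{MPS18} for part (1) and Perepechko's criterion \cite{Per21} for parts (2) and (3). We first recall their form. Suppose $S_m^n$ is covered by $H$-polar open subsets $U=S_m^n\setminus\Supp D$, where $D\in|dH|_{\Q}$ is effective, and each $U$ is flexible. Then $\Affcone_H(S_m^n)$ is flexible. If a single such $U$ is flexible, or merely generically flexible, then $\SAut$ has an open orbit, and the cone is generically flexible.

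\textbf{Construction of polar open sets.} View $\P(1,1,m)$ as the contraction of the negative section $\sigma$ of the Hirzebruch surface $\F_m$. Its ruling gives a pencil of lines $\ell$ through the vertex. In the relevant affine charts, removing the vertex section (at infinity) and one ruling fiber leaves $\A^1\times\A^1$ or the quotient $\A^2/\mu_m$. Blowing up $n$ general points $p_1,\dots,p_n$ lying on distinct fibers produces open subsets of a specific form. Remove from $S_m^n$ the strict transform of a curve $C$ in the class of the positive section (equivalently a hyperplane $x_2$-type curve of degree $m$). Also remove some fibers $\ell_i$ through the points $p_i$, or the exceptional curves $E_i$. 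The complement is then an affine surface of the form $\{xy=P(z)\}$, a Danielewski-type surface, or a cylinder-free but flexible surface obtained by affine modifications of $\A^2$ along several points. Such surfaces, namely $\{xy=P(z)\}$ with $P$ having simple roots, and more generally Gizatullin surfaces with suitable boundary, are known to be flexible. This is the Danielewski surface input from the keywords.

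To be $H$-polar, we need positive rational coefficients $a,b_i,c_i$ with
\[
H\equiv aC+\sum b_i\ell_i+\sum c_iE_i .
\]
Write $H=\alpha\pi^*\OO(m)-\sum\beta_iE_i$ in the basis of $\operatorname{Pic}$. Here $C\sim\pi^*\OO(m)-\sum_{i\in I}E_i$ for a curve through a subset $I$ of points. Note that $m$-sections pass through at most $m+1$ general points, since $h^0(\OO(m))=m+2$. Using ampleness inequalities such as $\alpha>\beta_i$ and $\alpha m-\sum\beta_i$ in the appropriate form, we solve for positive coefficients. Letting $I$, the choice of fibers, and the choice of $E_i$ vary, the resulting $U$'s cover $S_m^n$, moving as $C$ moves in its linear system. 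This gives (1) for $n\le m+1$: $C$ can pass through all blown-up points, so the boundary is flexible-friendly and the union of these $U$ is everything.

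\textbf{Parts (2) and (3).} For $n=m+2,m+3$, a curve $C$ in $|\pi^*\OO(m)|$ can no longer pass through all points. We instead use curves of class $\pi^*\OO(m)-\sum_{i\le m+1}E_i$, or of class $\pi^*\OO(2m)$ through more points, together with fibers through the remaining points. It then suffices to produce, for each ample $H$, one polar open subset that is flexible, or has an open $\SAut$-orbit, such as a Danielewski surface minus finitely many curves. Perepechko's criterion then gives generic flexibility.

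\textbf{Main obstacle.} The hardest step is the cone condition: for every very ample $H$, we must find a positive combination in the chosen boundary classes. This requires knowing the nef/effective cone of $S_m^n$, which is generated by the $E_i$, fibers minus $E_i$, and the $m$-section curves, and showing that the ample cone lies inside the union of the open cones spanned by our boundary configurations. I would first split the ample cone into chambers according to the ordering of the $\beta_i$, and handle each chamber with a tailored configuration. Verifying flexibility of the resulting complements via affine modification is comparatively routine.
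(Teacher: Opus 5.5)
\textbf{Main gap: parts (2) and (3).} Your argument rests on a false version of Perepechko's criterion. The $\G_a$-actions coming from a single $H$-polar open subset $U=S_m^n\setminus\Supp(D)$ can never produce an open orbit on the cone. To see this, pick $k$ with $kD\sim kH$ and $s\in H^0(S_m^n,\OO(kH))$ with divisor $kD$. Every action lifted from $U$ preserves $\pi_H^{-1}(U)=\{s\neq0\}$, on which $s$ is a unit, so it fixes $s$. Hence every orbit of the group generated by these lifts lies in a level set of $s$.

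This is why Theorem \ref{thm:Per} requires, besides transversality of $\mathcal U$, that no $\Q$-divisor $D'\sim_{\Q}H$ be supported on $Y\setminus\bigcup_{U\in\mathcal U}U$. For $\mathcal U=\{U\}$, the divisor $D$ itself violates this. So the step ``it then suffices to produce one polar open subset'' is exactly where your argument breaks, and the real content of (2) and (3) is missing: you need a family of flexible $H$-polar subsets whose common boundary cannot carry $H$.

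For $n=m+2$, the paper obtains this by moving $\widetilde B$ in the base-point-free pencil $|\widetilde Q+(m+1)\widetilde F-\sum_i\widetilde E_i|$ and using every $q$ with $\gamma_q>0$. Such $q$ exist, since $\gamma_{\min_i c_i}=\pi^{\ast}H\cdot\widetilde\Gamma_i>0$. The union then misses only the curves $E_i'$ with $\gamma_{c_i}\le0$, and any combination of those has zero $E_{i_0}$-coefficient for an $i_0$ with $\gamma_{c_{i_0}}>0$.

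For $n=m+3$, the curve $\widetilde B$ is a rigid $(-1)$-curve, so nothing moves. The paper instead views $\widetilde S_m^{m+3}$ as $\P^2$ blown up at $m+4$ points of a conic. This gives $m+4$ different rulings $Z_r\simeq\F_m$, each with a Danielewski subset $U_r$. The paper shows that the common boundary consists only of curves $C_i$ with at least three indices $j$ satisfying $\mu_j>\mu_i$. It then derives $b_H=\mu_j+\mu_k$ for some $j\neq k$, contradicting ampleness. Your proposed boundary curves ($m$-sections through $m+1$ points, members of $|\OO(2m)|$) are not shown to give flexible complements, and nothing in your proposal addresses the common-boundary condition.

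\textbf{Part (1).} This is essentially the paper's construction: a section through all $p_i$, removing $E_i$ or $E_i'$ according as $c_i<q$ or $c_i>q$. Two points need to be made explicit.
\begin{enumerate}
\item To get charts containing the points $E_i\cap E_i'$, the coefficient of $C$ must equal $c_i$. The $F$-coefficient of $H$ then exceeds what $C$ and the $E_j'$ supply by $\gamma_{c_i}\ge m(\alpha-\max_jc_j)>0$. This surplus has to sit on an additional fiber, the paper's $\gamma_qF_\infty$, and this is where $n\le m+1$ enters.
\item For $n=m+1$, the section through all the points is unique, so moving $C$ cannot cover $C$ itself. The paper covers $(S_m^n)_{\rm reg}\setminus\bigcup_iE_i'$ by the two affine planes $S_m^n\setminus\Supp(\delta F_j+\sum_ic_iE_i')$, $j=0,1$.
\end{enumerate}
Only the regular locus can be covered. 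You also still have to pass from $X_H$ to $\Affcone_H(S_m^n)$ via Lemma \ref{lem:normalization}.
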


The cases $n\leq2$ of Theorem \ref{thm:main}\textup{(1)} also follow from the flexibility of nondegenerate affine toric varieties, since $S_m^n$ is toric for $n=0,1,2$; see \cite[p. 469--470]{CP20} and \cite[Theorem 0.2]{AZK12}. Moreover, for a del Pezzo surface of degree $4$ with an $A_1$-singularity, flexibility of affine cones associated with ample divisors of birational type was proved in \cite{Won22}. The result of \cite{Won22} gives flexibility for that subfamily, whereas Theorem \ref{thm:main}(2) gives generic flexibility for every very ample Cartier divisor.

An open subset of $S_m^n$ is called a {\it cylinder} if it is isomorphic to $\A^1 \times Z$ for some affine curve $Z$. For an ample $\Q$-divisor $H$ on $S_m^n$, a cylinder $U$ in $S_m^n$ is an {\it $H$-polar cylinder} if there exists an effective $\Q$-divisor $D$ on $S_m^n$ such that $D \sim_{\Q} H$ and $U = S_m^n \setminus \Supp (D)$. 
The surfaces $S_m^n$ have already been studied extensively from the viewpoint of polarized cylinders. By \cite[Theorem 4.1]{Saw25}, if $n\leq m+3$, then every ample $\Q$-divisor $H$ on $S_m^n$ admits an $H$-polar cylinder (see also \cite[Remark 4.10]{KKW25}). On $S_m^{m+4}$, there is no anticanonical polar cylinder \cite{KKW25}. More recently, \cite{KSW26} obtained sufficient conditions for an ample $\Q$-divisor $H$ on $S_m^{m+4}$ to admit an $H$-polar cylinder: 
their result depends on whether the corresponding Fujita decomposition is of type $B(r_H)$ or type $C(\ell_H)$. Over an arbitrary field $\Bbbk$ of characteristic zero, \cite{KLS26} classified the cylindricity of $\Bbbk$-forms of these weighted singular del Pezzo surfaces. 

The results above concern the existence of individual polar cylinders. The present paper instead constructs collections of flexible $H$-polar affine open subsets and uses the associated additive group actions to study the special automorphism group of the affine cone.

The proof constructs effective $\Q$-divisors $D\sim_{\Q}H$ whose complements are isomorphic either to $\A^2$ or to smooth Danielewski surfaces. These affine open subsets cover $(S_m^n)_{\rm reg}$ when $n\leq m+1$. If $n=m+2$ or $n=m+3$, then the affine open subsets form a transversal collection, and no $\Q$-divisor $\Q$-linearly equivalent to $H$ is supported on the complement of their union.

The rest of this paper is organized as follows. In Section \ref{sect:prelim}, we recall the criteria for flexibility and for the existence of an open $\SAut$-orbit, together with basic properties of Danielewski surfaces and the divisor classes on $S_m^n$. In Section \ref{sect:main}, after giving a common construction of $H$-polar affine open subsets, we prove all three assertions of Theorem \ref{thm:main}.

\section{Preliminaries}\label{sect:prelim}
\subsection{Notation}\label{subsect:not}
Let $m\geq2$ and $n\geq0$. Let $\beta\colon S_m^n\to\P(1,1,m)$
be the blow-up at $n$ general points $p_1,\dots,p_n$ in the smooth locus; for $n=0$, we set $S_m^0\coloneqq\P(1,1,m)$. 
Note that $S_m^n$ has a unique singular point of type $\frac{1}{m}(1,1)$. 
Denote by $E_i$ the $\beta$-exceptional curve over $p_i$. Let $\pi\colon\widetilde S_m^n\to S_m^n$
be the minimal resolution, and let $\widetilde Q$ be its exceptional curve. 
The exceptional curve $\widetilde Q$ satisfies $\widetilde Q^2=-m$.
Since the points $p_i$ lie in the smooth locus, resolving the unique singular point of $S_m^n$ commutes with these blow-ups. Hence, the strict transforms $\widetilde E_i\coloneqq\pi_{\ast}^{-1}(E_i)$ are pairwise disjoint $(-1)$-curves. Contracting them gives a morphism $\sigma\colon\widetilde S_m^n\to \F_m$. Put $\overline{p}_i\coloneqq\sigma(\widetilde E_i)$ and $\overline Q\coloneqq\sigma_{\ast}(\widetilde Q)$. Let $\tau\colon\F_m\longrightarrow\P^1$
be the ruling, and let $\overline F$ denote its fiber class.
If $\vartheta\colon\F_m\to\P(1,1,m)$ denotes the contraction of the negative section, then we have the following diagram.
\[
\begin{tikzcd}
\widetilde S_m^n \arrow[r,"\sigma"] \arrow[d,"\pi"']
& \F_m \arrow[d,"\vartheta"] \arrow[r,"\tau"] & \P^1 \\
S_m^n \arrow[r,"\beta"']
& \P(1,1,m)
\end{tikzcd}
\]
 
We assume that the blown-up points are in general position; in particular, the points $\overline{p}_i\in\F_m\setminus\overline Q$ lie on pairwise distinct fibers. Set $\widetilde F\coloneqq\sigma^{\ast}(\overline F)$. For each $i$, let $\overline F_i$ be the fiber through $\overline{p}_i$, put $\widetilde E_i'\coloneqq\sigma_{\ast}^{-1}(\overline F_i)$, and set $F\coloneqq\pi_{\ast}(\widetilde F)$ and $E_i'\coloneqq\pi_{\ast}(\widetilde E_i')$.
Then $\widetilde E_i'\sim\widetilde F-\widetilde E_i$.

We use the same symbol for an irreducible curve and its divisor class whenever no confusion is likely to arise. One has
\[
\Cl(S_m^n)_{\Q}=\Q[F]\oplus\bigoplus_{i=1}^n\Q[E_i].
\]
For a very ample Cartier divisor $H$ on $S_m^n$, write
\begin{equation}\label{eq:Hclass}
H\sim_{\Q}aF-\sum_{i=1}^nc_iE_i
\end{equation}
for some $a,c_1,\dots,c_n \in \Q$. 
Since $\pi^{\ast}F=\widetilde F+\frac1m\widetilde Q$,
\begin{equation}\label{eq:Hpullback}
\pi^{\ast}H
\sim_{\Q}
\alpha\widetilde Q+a\widetilde F-\sum_{i=1}^nc_i\widetilde E_i,
\qquad
\alpha\coloneqq\frac am.
\end{equation}
Ampleness gives
\begin{equation}\label{eq:basic-positive}
c_i=H\cdot E_i>0,
\qquad
\alpha-c_i=H\cdot E_i'>0.
\end{equation}

\subsection{Polar affine open subsets and affine cones}\label{subsect:lifting}
Let $Y$ be a normal projective variety and let $H$ be a very ample Cartier divisor. An affine open subset $U\subseteq Y$ is \emph{$H$-polar} if $U=Y\setminus\Supp(D)$ for an effective $\Q$-divisor $D\sim_{\Q}H$.

Let $A_H$ denote the homogeneous coordinate ring of the image of $Y$ under the embedding defined by $|H|$, and set
\[
R(Y,H)\coloneqq\bigoplus_{d\geq0}H^0\bigl(Y,\OO_Y(dH)\bigr).
\]
Thus, $A_H$ is naturally a graded subring of $R(Y,H)$, with 
\[
A_{H,d}=\im\left(\mathrm{Sym}^d H^0(Y,\OO_Y(H))\longrightarrow H^0(Y,\OO_Y(dH)\right).
\]
Let $X_H\coloneqq \Spec A_H$. The natural morphism $\Affcone_H(Y)\to X_H$ is the normalization morphism.

\begin{lemma}\label{lem:normalization}
The ring $R(Y,H)$ is the integral closure of $A_H$. The normalization
\[
\nu_H\colon\Affcone_H(Y)\longrightarrow X_H
\]
restricts to an isomorphism between the complements of the vertices. Every algebraic $\G_a$-action on $X_H$ lifts uniquely to $\Affcone_H(Y)$.
\end{lemma}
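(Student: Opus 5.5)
The plan is to compare the two graded rings degree by degree, using that they agree in all large degrees. Since $H$ is very ample, the embedding $Y\hookrightarrow\P(H^0(Y,\OO_Y(H))^{\vee})$ has image $Y$ itself. By Serre vanishing, the restriction maps $H^0(\P,\OO_\P(d))\to H^0(Y,\OO_Y(dH))$ are surjective for $d\gg0$. Hence $A_{H,d}=R(Y,H)_d$ for all $d\geq d_0$, and $R(Y,H)/A_H$ is concentrated in finitely many degrees.

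\emph{Integral closure.} Every homogeneous $s\in R(Y,H)_d$ satisfies $s^k\in A_H$ for $kd\geq d_0$, so $s$ is integral over $A_H$. Next, $R(Y,H)$ is normal. Indeed, $Y$ is normal, so $R(Y,H)=\bigoplus_d H^0(Y,\OO_Y(dH))$ is the section ring of an ample line bundle on a normal projective variety, and such a ring is normal. One way to see this is as the ring of invariant functions, under the $\G_m$-action, on the total space of $\OO_Y(-H)$, a normal variety, and then to intersect over an affine cover. Both rings have the same fraction field, since $A_H$ contains all elements of large degree and ratios of those give everything. Therefore $R(Y,H)$ is the integral closure of $A_H$, and $\nu_H$ is finite and birational.

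\emph{Isomorphism off the vertex.} Let $\mathfrak m=A_{H,+}$. For a homogeneous $f\in A_{H,e}$ with $e>0$, the localizations satisfy $(A_H)_f=R(Y,H)_f$. To see this, take $s\in R_d$; then $s=(sf^k)/f^k$ with $sf^k\in A_H$ once $d+ke\geq d_0$. The sets $D(f)$ cover $X_H\setminus\{\mathfrak m\}$, and their preimages cover $\Affcone_H(Y)$ minus its vertex (the irrelevant ideal of $R$ is the radical of $\mathfrak m R$). Hence $\nu_H$ is an isomorphism between the punctured cones.

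\emph{Lifting $\G_a$-actions.} A $\G_a$-action on $X_H$ is the same as a locally nilpotent derivation $\partial$ of $A_H$. By Seidenberg's theorem, a derivation of a noetherian domain extends to its integral closure in the fraction field, since $\Bbbk$ has characteristic zero. So $\partial$ extends uniquely, as a derivation of $\mathrm{Frac}(A_H)$, to a derivation $\tilde\partial$ of $R(Y,H)$. It remains to check local nilpotency. Let $c$ be a nonzero element of the conductor of $R(Y,H)$ into $A_H$, for example a power of a nonzero element of $A_{H,1}$, which works because $R/A_H$ is killed in bounded degrees.

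The cleanest argument I expect to use is the standard fact that the extension of a locally nilpotent derivation to the normalization is again locally nilpotent. Geometrically: the action $\G_a\times X_H\to X_H$ lifts to normalizations, because $\G_a\times\Affcone_H(Y)$ is normal and maps dominantly to $X_H$, so the universal property of normalization applies. This gives a $\G_a$-action on $\Affcone_H(Y)$ compatible with $\nu_H$, and its associated derivation is $\tilde\partial$. Uniqueness follows because $\nu_H$ is birational: two lifts agree on a dense open subset, hence everywhere by separatedness.

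The main obstacle is checking carefully that the morphism $\G_a\times\Affcone_H(Y)\to X_H$ factors through the normalization so as to define an honest action, including the identity and associativity axioms. These axioms follow because they hold after composing with $\nu_H$, and $\nu_H$ is an isomorphism on a dense open subset, so equality holds everywhere by separatedness.
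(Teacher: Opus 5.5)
Your proposal is correct and follows essentially the paper's route. Serre vanishing gives $A_{H,d}=R(Y,H)_d$ for $d\gg0$, and localizing at positive-degree elements of $A_H$ gives the isomorphism off the vertices. The lifting is the universal property of normalization applied to the dominant map $\G_a\times\Affcone_H(Y)\to X_H$, which you spell out where the paper only says ``functoriality''. You also prove normality of $R(Y,H)$ directly instead of citing Hartshorne.

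One slip: $R(Y,H)$ is the ring of \emph{all} regular functions on the total space of $\OO_Y(-H)$, not of the $\G_m$-invariant ones, which are only $\Bbbk$. The Seidenberg/conductor detour is also unnecessary once the geometric lifting is in place.
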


\begin{proof}
By \cite[Chapter II, Exercise 5.14(a)]{Har77}, the section ring $R(Y,H)$ is the integral closure of $A_H$. Let $0\neq s\in A_{H,1}$, and let $t\in R(Y,H)_d$ be homogeneous. The Serre vanishing implies that the ideal sheaf of the embedding defined by $|H|$ gives $A_{H,e}=R(Y,H)_e$ for all sufficiently large $e$. Choose $k\gg 0$ such that $A_{H,d+k}=R(Y,H)_{d+k}$. Then $s^kt\in A_{H,d+k}$ and consequently $t\in (A_H)_s$. Hence, we obtain that $(A_H)_s=R(Y,H)_s$.

Let $\mathfrak m_A\coloneqq \oplus_{d>0} A_{H,d}$ and $\mathfrak m_R\coloneqq \oplus_{d>0} R(Y,H)_d$ be the homogeneous maximal ideals defining the vertices. If $t\in R(Y,H)$ is homogeneous of positive degree, then $t^N\in A_H$ for $N\gg 0$, and $t^N\in \mathfrak m_A R(Y,H)$. Therefore, we have $\sqrt{\mathfrak m_A R(Y,H)}=\mathfrak m_R$. Since $A_H$ is generated by $A_{H,1}$, the principal open subsets $D(s)$ with $0\neq s\in A_{H,1}$ cover $\Affcone_H(Y)\setminus \{\mathfrak m_R\}$. Since we have $(A_H)_s=R(Y,H)_s$, the normalization $\nu_H$ restricts to an isomorphism between the complements of the vertices.

The lifting statement follows from the functoriality of normalization with respect to algebraic group actions.
\end{proof}

The following proposition allows us to deduce flexibility of the affine cone from a covering of the smooth locus by flexible $H$-polar affine open subsets.

\begin{proposition}[{cf. \cite[Theorem 1.4 and Corollary 1.2]{MPS18}}]\label{prop:flex-cover}
Let $Y$ be a normal projective variety, and let $H$ be a very ample Cartier divisor on $Y$. 
Assume that there exist finitely many smooth flexible $H$-polar affine open subsets $U_1,\dots,U_r$ of $Y$ such that $Y_{\rm reg}=\bigcup_{i=1}^rU_i$. 
Then $\Affcone_H(Y)$ is flexible.
\end{proposition}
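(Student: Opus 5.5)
The plan is to reduce to the criterion of Theorem~\ref{thm:AFKKZ}. Write $V\coloneqq\Affcone_H(Y)$ and $V^\ast\coloneqq V\setminus\{\text{vertex}\}$. The $\G_m$-quotient $\rho\colon V^\ast\to Y$ is a $\G_m$-bundle, and it is locally trivial over $Y_{\rm reg}$ because $H$ is Cartier. Hence $V^\ast\cap V_{\rm reg}=\rho^{-1}(Y_{\rm reg})$. If the vertex is a smooth point, then $V\cong\A^N$, since $V$ is a cone with smooth vertex, and $V$ is flexible. It therefore suffices to show that every point $v\in\rho^{-1}(Y_{\rm reg})$ is flexible. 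By hypothesis $\rho(v)$ lies in some $U=U_i$, so everything happens over one $H$-polar open set $U=Y\setminus\Supp(D)$ with $D\sim_{\Q}H$ effective.

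\emph{Step 1: lifting.} Choose $N$ with $ND$ integral, and let $s\in H^0(Y,\OO_Y(NH))$ be the section with $\operatorname{div}(s)=ND$. Then $D(s)=\rho^{-1}(U)$ is a principal affine open subset of $V$. Its coordinate ring is the graded ring $\bigoplus_{d\in\Z}H^0(U,\OO(dH))$. After a trivialization of $\OO(H)|_U$ by a nowhere-vanishing section $t$, this ring becomes $\OO(U)[t,t^{-1}]$. Let $\partial$ be a locally nilpotent derivation of $\OO(U)$. Extend it to a homogeneous derivation $\delta$ of degree $0$ by setting $\delta(t)=0$; it remains locally nilpotent. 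For $k\gg0$, the replica $s^k\delta$ maps every homogeneous element $f$ of $\bigoplus_d H^0(Y,dH)$ into that ring: $\delta(f)$ has poles only along $D$ of bounded order, and $s^k$ clears them. So $s^k\delta$ is a locally nilpotent derivation of $R(Y,H)$, that is, a $\G_a$-action on $V$. Its orbits through points of $D(s)$ agree, up to the nonvanishing factor $s^k$, with those of $\delta$. This is the argument of \cite{MPS18}; the polarity $D\sim_{\Q}H$ is used exactly to make $s$ a global section of a multiple of $H$.

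\emph{Step 2: spanning the tangent space.} At $v\in\rho^{-1}(U)$ we have $T_vV=T^{\rm hor}\oplus\Bbbk\cdot e$, where $e$ is the Euler (vertical) field. Since $U$ is smooth and flexible, the vectors $\partial_j(\rho(v))$ span $T_{\rho(v)}U$. Their lifts $\delta_j$ therefore span a complement to the vertical line, so it remains to obtain the vertical direction. For this, change the trivialization: replace $t$ by $t'=ht$ with $h\in\OO(U)^\times$. This does not immediately help, since units on a flexible variety are constant. Instead use the replicas $g\partial$ with $g\in\ker\partial$ together with the homogeneous relation. Alternatively, and more robustly, cover $\rho(v)$ by a second polar set $U_j$ coming from a different divisor $D'$ and compare. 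The lifts from $D$ and from $D'$ differ by a multiple of $e$, namely by $\partial(\log(s'/s))\cdot e$ up to normalization, where $s'/s$ is a rational function on $Y$. Choosing $\partial$ with $\partial(s'/s)(\rho(v))\neq0$ produces a nonzero vertical component. If no second $U_j$ contains $\rho(v)$, I would replace $D$ by $D+\operatorname{div}(f)/N$ for a suitable $f$ regular on $U$ whose zero locus avoids $\rho(v)$; this gives another polar divisor whose complement is a principal open subset of $U$, which is still flexible and has the same lifted derivations (with replicas).

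\emph{Main obstacle.} I expect the vertical direction in Step 2 to be the main difficulty. My first attempt is the explicit comparison of two trivializations described there. Concretely, for $\delta_s$ and $\delta_{s'}$ lifted via $s$ and $s'$, the difference $\delta_s-\delta_{s'}$ at $v$ is $\tfrac1N\partial(\log(s'/s))(\rho(v))\,e$. Once all points of $\rho^{-1}(Y_{\rm reg})$ are flexible, Theorem~\ref{thm:AFKKZ} gives that $V$ is flexible.
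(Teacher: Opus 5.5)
There is a genuine gap, and it sits in Step 2.

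Step 1 is essentially the \cite{MPS18} lifting construction that the paper also uses, apart from one repairable point. $\OO(H)|_U$ need not be trivial: for $Y=\P^2$ and $D=\frac12C$ with $C$ a smooth conic, $U=\P^2\setminus C$ is flexible but $\OO(1)|_U$ is $2$-torsion. So you should define $\delta$ on $\OO(U)[s^{\pm1}]$ by $\delta(s)=0$ and extend it along the finite \'etale $\mu_N$-quotient of $\rho^{-1}(U)$; it stays locally nilpotent by integrality.

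None of your mechanisms can produce the vertical direction. Since $s$ is a unit on $D(s)=\rho^{-1}(U)$ and locally nilpotent derivations kill units, \emph{every} $\G_a$-action on $V$ preserving $D(s)$ satisfies $\xi(s)=0$. This covers all lifts $s^k\delta$ and all replicas. So its tangent vector at $v$ lies in $\ker(ds_v)$, which does not contain $e_v$ because $e(s)=Ns\neq0$.

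Your comparison of two lifts fails for the same reason. Lifting one $\partial$ through both $D$ and $D'$ requires $\partial$ to be locally nilpotent on $\OO(U)$ and on $\OO(U')$. It is then locally nilpotent on $\OO(U\cap U')$, which these two rings generate. But $s'/s$ is a unit on $U\cap U'$, so $\partial(s'/s)\equiv0$ and the two lifts coincide. Hence the condition $\partial(s'/s)(\rho(v))\neq0$ can never be met.

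The fallback fails too. $U\setminus\{f=0\}$ carries the nonconstant unit $f$, so it is not flexible. Moreover, $\partial$ extends to a locally nilpotent derivation there only if $\partial f=0$.

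What is missing is a global step. The vertical direction only appears in the group $G$ generated by lifts from \emph{different} $U_i$, or equivalently in conjugates $g\xi g^{-1}$ by elements $g$ that move $D(s)$. The paper argues with orbits rather than tangent vectors. Because the $U_i$ pairwise meet and each $\SAut(U_i)$ is transitive, some $G$-orbit satisfies $\rho(Gx)=Y_{\rm reg}$. Then \cite[Corollary 1.2]{MPS18}, which uses the normality of $Y$, gives $Gx=\rho^{-1}(Y_{\rm reg})$. Theorem \ref{thm:AFKKZ} turns this transitivity into flexibility, and the vertex is handled as you do.

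You can also see the vertical motion by hand. Let $G_i$ be the group of lifts from $U_i$; it fixes $s_i$. Put $h\coloneqq s_2^{N_1}/s_1^{N_2}\in\Bbbk(Y)$, which is nonconstant since $D_1\neq D_2$. Going from $v$ by $G_1$ to a point over $w\in U_1\cap U_2$, and back by $G_2$ to a point over $\rho(v)$, multiplies $s_1^{N_2}$ by $h(w)/h(\rho(v))$. So the orbit meets $\G_m$-fibers in infinite sets and is therefore open. Since $\G_m$ normalizes $G$, the orbit is also $\G_m$-stable.

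As a sanity check, your argument never uses the normality of $Y$. It would equally ``prove'' that the cone $\{y^2z=x^3\}$ over a cuspidal cubic is flexible, since there $Y_{\rm reg}\cong\A^1$ is a single polar flexible chart, exactly the case your fallback addresses. Yet that cone is not flexible: every $\G_a$-action fixes the unit $y$ of $\OO(V_{\rm reg})=\OO(D(y))$.
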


\begin{proof}
If $X_H$ is an affine space, then $A_H$ is normal, and Lemma \ref{lem:normalization} gives $\Affcone_H(Y)=X_H$. Hence, the assertion is immediate. 

Assume that $X_H$ is not an affine space. By the construction in the proof of \cite[Theorem 1.4]{MPS18}, finitely many $\G_a$-actions on the $U_i$ lift to homogeneous $\G_a$-actions on $X_H$. Let $G\subseteq \SAut(X_H)$ be the subgroup generated by the lifted actions. Since any two nonempty open subsets of the irreducible variety $Y$ intersect and $\SAut(U_i)$ acts transitively on $U_i$, the orbit correspondence gives a point $x\in(X_H)_{\rm reg}$ such that $\pi_H(Gx)=Y_{\rm reg}$,
where $\pi_H\colon X_H\setminus\{0\}\to Y$ is the natural projection. Since $Y$ is normal, by \cite[Corollary 1.2]{MPS18}, we have $Gx=(X_H)_{\rm reg}$.

By Lemma \ref{lem:normalization}, every one-parameter unipotent subgroup used above lifts uniquely to $\Affcone_H(Y)$, and the normalization is an isomorphism away from the vertices. If the vertex of $\Affcone_H(Y)$ is singular, the lifted subgroup acts transitively on $(\Affcone_H(Y))_{\rm reg}$.

Suppose that the vertex of $\Affcone_H(Y)$ is smooth, and let $\mathfrak m$ be its homogeneous maximal ideal. Choose homogeneous elements $f_1,\dots,f_N\in \mathfrak{m}$ whose classes form a basis of $\mathfrak m/\mathfrak m^2$. Since the classes of the $f_i$ generate $\mathfrak{m}/\mathfrak{m}^2$, induction on the degree shows that $f_1,\dots,f_N$ generate $R(Y,H)$ as a $\Bbbk$-algebra. Since the vertex is smooth, we have $N=\dim_{\Bbbk}\mathfrak m/\mathfrak m^2=\dim\Affcone_H(Y)$. The surjective homomorphism $\Bbbk[z_1,\dots,z_N]\to R(Y,H)$ has a kernel which is a prime ideal of height zero and is therefore zero. Hence, $\Affcone_H(Y)$ is an affine space. In either case, $\Affcone_H(Y)$ is flexible.
\end{proof}

\begin{definition}[{cf. \cite[Definition 2.3]{Per21}}]\label{def:transversal}
Let $\mathcal U$ be a collection of affine open subsets of $Y$.
\begin{enumerate}[label=\textup{(\arabic*)}]
\item A subset $Z\subseteq \bigcup_{U\in \mathcal U} U$ is {\it $\mathcal U$-invariant} if $Z\cap U$ is $\SAut(U)$-invariant for every $U\in\mathcal U$.
\item The collection $\mathcal U$ is {\it transversal} if $\bigcup_{U\in\mathcal U}U$ has no nonempty proper $\mathcal U$-invariant subset.
\end{enumerate}
\end{definition}

\begin{theorem}[{\cite[Theorem 2.4]{Per21}}]\label{thm:Per}
Let $Y$ be a normal projective variety, let $H$ be a very ample divisor on $Y$, and let $\mathcal U$ be a transversal collection of $H$-polar affine open subsets of $Y$. Let $\pi_H\colon X_H\setminus \{0\}\to Y$ be the projection. Then there exists an $\SAut$-orbit $\mathcal{O}\subseteq X_H$ such that $\pi_H(\mathcal{O})\supseteq \bigcup_{U\in \mathcal{U}} U$. If no $\Q$-divisor $D\sim_{\Q}H$ satisfies
\[
\Supp(D)\subseteq Y\setminus\bigcup_{U\in\mathcal U}U,
\]
then $\mathcal{O}$ is open and contains $\pi_H^{-1}(\bigcup_{U\in \mathcal U} U)$.
\end{theorem}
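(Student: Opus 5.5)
Since this is Perepechko's criterion \cite[Theorem 2.4]{Per21}, I would reprove it in three stages. First, lift the unipotent symmetries of each $U\in\mathcal U$ to homogeneous $\G_a$-actions on $X_H$. Second, use transversality to produce one orbit whose image covers $\bigcup_{U\in\mathcal U}U$. Third, rule out a non-open orbit by producing a forbidden divisor $D\sim_{\Q}H$.

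\emph{Lifting.} Fix $U\in\mathcal U$ with $U=Y\setminus\Supp(D)$, $D\sim_{\Q}H$ effective. Choose $d$ with $dD$ integral and $dD\sim dH$. Then there is a homogeneous $s\in A_{H,d}$, after enlarging $d$ so that $A_{H,d}=R(Y,H)_d$, with $\operatorname{div}(s)=dD$ on $Y$. Hence $\pi_H^{-1}(U)$ is the principal open set $\{s\neq0\}$, and the degree-zero part of $(A_H)_s$ is $\OO(U)$. Take a locally nilpotent derivation $\partial$ of $\OO(U)$, for instance one generating a $\G_a$-subgroup of $\SAut(U)$. I would extend it to a homogeneous derivation of $(A_H)_s$ by declaring $\partial(s)=0$. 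This ring is $\OO(U)$ adjoined $s^{\pm1}$ and finitely many roots of units of degree $d$, so the extension is determined and is locally nilpotent. Replacing $\partial$ by the replica $s^N\partial$ with $N\gg0$, I would clear denominators on the finitely many generators of $A_{H,1}$. This gives a homogeneous locally nilpotent derivation of $A_H$, hence a $\G_a$-action on $X_H$. Its orbits over $U$ project onto the orbits of the original subgroup, up to the $s^N$ factor, which is a unit on $U$. Consequently, for any $\SAut(X_H)$-orbit $\mathcal O$, the set $\pi_H(\mathcal O)\cap U$ is $\SAut(U)$-invariant.

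\emph{Orbit covering the union.} Since $\G_m$ normalizes $\SAut(X_H)$, each set $\pi_H(\mathcal O)\cap\bigcup_{U\in\mathcal U}U$ is $\mathcal U$-invariant. These sets are nonempty for orbits meeting $\pi_H^{-1}(\bigcup_{U\in\mathcal U} U)$. By transversality each such set must then be all of $\bigcup_{U\in\mathcal U}U$, which gives the first assertion.

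\emph{Openness.} This is the main obstacle. Suppose $\mathcal O$ is not open. Because $\pi_H|_{\mathcal O}$ dominates $Y$, we get $\dim\mathcal O=\dim Y$, so the closure $\overline{\mathcal O}$ is a hypersurface. I would pass to the normalization $\Affcone_H(Y)$ using Lemma \ref{lem:normalization}, so that divisors make sense there. The $\G_m$-translates $t\cdot\mathcal O$ are again $\SAut$-orbits, so they sweep out a one-parameter family of invariant hypersurfaces. I would then produce a nonconstant $\SAut$-invariant rational function, and from it a homogeneous $\SAut$-semi-invariant (in fact invariant) function $g$ of positive degree $e$. I would try Rosenlicht's theorem together with the fact that a group generated by unipotent subgroups has no nontrivial characters. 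Once $g$ exists, its zero locus is a closed invariant set. If it met $\pi_H^{-1}(U)$ for some $U\in\mathcal U$, then by the previous step it would contain all of $\pi_H^{-1}(\bigcup_{U\in\mathcal U} U)$, which is dense, a contradiction. Hence $D\coloneqq\frac1e\operatorname{div}(g)$, viewed on $Y$, is an effective $\Q$-divisor with $D\sim_{\Q}H$ and $\Supp(D)\subseteq Y\setminus\bigcup_{U\in\mathcal U}U$, contradicting the hypothesis. So $\mathcal O$ is open. The same transversality argument shows it contains $\pi_H^{-1}(\bigcup_{U\in\mathcal U} U)$, using also that the cone directions are reached via $\G_m$-conjugation. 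The delicate point I expect is producing the homogeneous invariant $g$ from the mere existence of a non-open orbit.
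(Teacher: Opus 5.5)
The paper does not prove this statement; it quotes \cite[Theorem 2.4]{Per21}. Your lifting step and the first assertion are sound. They follow the standard replica argument of \cite{MPS18,Per21}, with two corrections to your justification. The extension of $\partial$ to $(A_H)_s$ exists and is unique because $\OO(U)[s^{\pm1}]\subseteq(A_H)_s$ is the quotient by a free $\mu_d$-action, hence étale. It is locally nilpotent because the extension is integral. The phrase ``roots of units'' is inaccurate, since a degree-$k$ element $t$ only satisfies $t^d\in s^k\OO(U)$.

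The genuine gap is the openness step, exactly where you expect it: you never produce $g$, and the tools you name do not produce it. Rosenlicht's theorem concerns actions of algebraic groups, and $\SAut(X_H)$ is not an algebraic group. Even granted a nonconstant $\SAut$-invariant rational function, your sketch has no mechanism for extracting a homogeneous one, since rational functions do not split into homogeneous components. The absence of characters only turns semi-invariants into invariants, which is not the difficulty. Moreover, a correctly constructed $g$ need not be regular along $\pi_H^{-1}(Y\setminus W)$, where $W\coloneqq\bigcup_{U\in\mathcal U}U$. So you cannot claim that $\frac1e\operatorname{div}(g)$ is effective. This is harmless only because the hypothesis excludes every $\Q$-divisor $D\sim_{\Q}H$ supported on $Y\setminus W$, effective or not. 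That is also how the paper verifies it in Lemmas \ref{lem:no-divisor-middle} and \ref{lem:no-divisor-last}.

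The missing function can be built directly from your second step, with no Rosenlicht-type input.
\begin{enumerate}
\item Put $\Omega\coloneqq\pi_H^{-1}(W)$. It avoids the vertex, so it is normal by Lemma \ref{lem:normalization}.
\item Since $\pi_H(\mathcal O)\supseteq W$, every $\SAut(X_H)$-orbit meeting $\Omega$ is a translate $t\cdot\mathcal O$ with $t\in\G_m$.
\item Orbits are locally closed and irreducible \cite{AFKKZ13}, so two orbits with the same closure are both open dense in it, hence meet, hence coincide.
\item By (3), if $\mathcal O$ is not open, its stabilizer in $\G_m$ equals that of $\overline{\mathcal O}$. It is therefore a closed subgroup. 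It is proper, since a $\G_m$-stable $\mathcal O$ would contain $\Omega$ and be open. Hence it is $\mu_e$ for some $e\geq1$.
\item We have $\overline{\mathcal O}\cap\Omega=\mathcal O\cap\Omega$. Indeed, an orbit in $\overline{\mathcal O}$ meeting $\Omega$ is some $t\cdot\mathcal O\subseteq\overline{\mathcal O}$ of the same dimension, so it has the same closure and equals $\mathcal O$ by (3).
\item Thus $\Sigma\coloneqq\mathcal O\cap\Omega$ is closed in $\Omega$ and meets each $\G_m$-fiber in a single $\mu_e$-orbit. Hence $g(t\cdot y)\coloneqq t^e$ for $y\in\Sigma$ is a well-defined function on $\Omega$.
\item It is regular: $(\G_m\times\Sigma)/\mu_e\to\Omega$ is a bijective morphism onto a normal variety. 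In characteristic $0$ it is birational, hence an isomorphism by Zariski's main theorem.
\item Depending on the sign convention for the grading, $g$ or $g^{-1}$ is an invertible homogeneous function of degree $e>0$ on $\Omega$. This is a nowhere vanishing section of $\OO_Y(eH)$ over $W$.
\item Its divisor $D'$ on $Y$ satisfies $D'\sim eH$ and $\Supp(D')\subseteq Y\setminus W$, contradicting the hypothesis.
\end{enumerate}
The same orbit description settles the last claim. If $\mathcal O$ is open, so is every $t\cdot\mathcal O$. Two open orbits meet, so $t\cdot\mathcal O=\mathcal O$ for all $t$, and hence $\mathcal O\supseteq\Omega$.
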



Transversality also follows from a covering by smooth flexible affine open subsets.

\begin{lemma}\label{lem:connected-transversal}
Let $Y$ be an irreducible projective variety, and let $\mathcal U$ be a collection of smooth flexible affine open subsets of $Y$. Then $\mathcal U$ is transversal.
\end{lemma}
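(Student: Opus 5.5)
Let $W\coloneqq\bigcup_{U\in\mathcal U}U$ and let $Z\subseteq W$ be a nonempty $\mathcal U$-invariant subset. We will show that $Z=W$, so that $W$ has no nonempty proper $\mathcal U$-invariant subset, which is exactly the definition of transversality in Definition \ref{def:transversal}(2). If $\mathcal U$ is empty then $W$ is empty and there is nothing to prove. So we assume $\mathcal U\neq\emptyset$; then every $U\in\mathcal U$ is nonempty, since otherwise it contributes nothing and may be discarded.

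The key input is that each $U\in\mathcal U$ is smooth, irreducible and flexible, and of dimension at least $2$ in the cases of interest. Hence Theorem \ref{thm:AFKKZ} applies: $\SAut(U)$ acts transitively on $U_{\rm reg}=U$. Irreducibility is automatic, because $U$ is a nonempty open subset of the irreducible variety $Y$. For the degenerate case $\dim Y\leq 1$, we instead use the definition of flexibility directly. A point of a smooth curve whose tangent space is spanned by tangents to $\G_a$-orbits has a nontrivial $\G_a$-orbit through it. So $U$ is a single orbit of a $\G_a$-action, that is, $U\cong\A^1$, and transitivity still holds. (If $\dim U=0$, then $U$ is a point, and transitivity is trivial.) Consequently, for each $U\in\mathcal U$, the $\SAut(U)$-invariant subset $Z\cap U$ is either empty or all of $U$.

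We then run a connectedness argument. Let $\mathcal U_1\coloneqq\{U\in\mathcal U : U\subseteq Z\}$ and $\mathcal U_0\coloneqq\{U\in\mathcal U: U\cap Z=\emptyset\}$. By the previous paragraph, $\mathcal U=\mathcal U_0\sqcup\mathcal U_1$. Since $Z\neq\emptyset$ and $Z\subseteq W$, some $U$ meets $Z$, so $\mathcal U_1\neq\emptyset$. Suppose some $U'$ lies in $\mathcal U_0$, and take any $U\in\mathcal U_1$. Both are nonempty open subsets of the irreducible variety $Y$, so $U\cap U'\neq\emptyset$. But $U\cap U'\subseteq Z\cap U'=\emptyset$, a contradiction. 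Hence $\mathcal U_0=\emptyset$, and so $W=\bigcup_{U\in\mathcal U_1}U\subseteq Z$, giving $Z=W$.

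There is no real obstacle here. The only points needing care are the transitivity of $\SAut(U)$ on $U$, which is Theorem \ref{thm:AFKKZ} together with smoothness ($U_{\rm reg}=U$), and the fact that two nonempty opens of an irreducible variety intersect.
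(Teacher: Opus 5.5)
Your proof is correct and is essentially the paper's argument. Transitivity of $\SAut(U)$ on each smooth flexible $U$ (via Theorem~\ref{thm:AFKKZ}) forces $Z\cap U\in\{\emptyset,U\}$, and since nonempty opens of the irreducible $Y$ meet, $Z$ propagates from one member of $\mathcal U$ to all of them. The paper does this propagation directly rather than through your partition $\mathcal U_0\sqcup\mathcal U_1$, and it leaves implicit the case $\dim Y\leq 1$ that you handle separately, where Theorem~\ref{thm:AFKKZ} does not literally apply.
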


\begin{proof}
Let $Z$ be a nonempty $\mathcal U$-invariant subset of $\bigcup_{U\in\mathcal U}U$, and choose $U_0\in\mathcal U$ such that $Z\cap U_0\neq\emptyset$. Since $U_0$ is flexible, $\SAut(U_0)$ acts transitively on $U_0$, and hence $U_0\subseteq Z$. For every $U\in\mathcal U$, the irreducibility of $Y$ implies that $U_0\cap U\neq\emptyset$. Thus, $Z\cap U\neq\emptyset$, and the transitivity of $\SAut(U)$ implies $U\subseteq Z$. Therefore, $Z=\bigcup_{U\in\mathcal U}U$, and $\mathcal U$ is transversal.
\end{proof}

\begin{corollary}\label{cor:open-orbit}
    Let $Y$ be a normal projective variety, let $H$ be a very ample Cartier divisor on $Y$, and let $\mathcal U$ be a collection of smooth flexible $H$-polar affine open subsets of $Y$. Assume that no $\Q$-divisor $D\sim_{\Q} H$ satisfies
    \[
    \Supp(D)\subseteq Y\setminus \bigcup_{U\in \mathcal U}U.
    \]
    Then $\Affcone_H(Y)$ has an open $\SAut$-orbit contained in $\Affcone_H(Y)_{\rm reg}$. In particular, $\Affcone_H(Y)$ is generically flexible.
\end{corollary}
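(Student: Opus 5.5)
The plan is to combine Lemma \ref{lem:connected-transversal} with Theorem \ref{thm:Per}, and then transfer the resulting open orbit from $X_H$ to $\Affcone_H(Y)$ using Lemma \ref{lem:normalization}.

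First, since the members of $\mathcal U$ are smooth flexible affine open subsets of the irreducible variety $Y$, Lemma \ref{lem:connected-transversal} shows that $\mathcal U$ is transversal. The members are $H$-polar by hypothesis. Theorem \ref{thm:Per} therefore applies to $\mathcal U$. Combined with the assumption that no $D\sim_{\Q}H$ is supported on $Y\setminus\bigcup_{U\in\mathcal U}U$, it produces an open $\SAut(X_H)$-orbit $\mathcal O\subseteq X_H$ containing $\pi_H^{-1}\bigl(\bigcup_{U\in\mathcal U}U\bigr)$.

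Next I will check that $\mathcal O$ does not contain the vertex $0$. The vertex is fixed by every homogeneous $\G_a$-action. More intrinsically, an orbit containing $0$ together with a nonempty open set would force $0$ to be a smooth point with $X_H$ homogeneous. I will handle this as follows. Since $\mathcal O$ is open and consists of points of one orbit, all its points are alike; it contains points of $\pi_H^{-1}(U)$, which lie in $X_H\setminus\{0\}$. Over a smooth $U$, the cone minus the vertex is a $\G_m$-bundle, so these points are smooth. Hence $\mathcal O\subseteq (X_H)_{\rm reg}$, and $\mathcal O\setminus\{0\}$ is a nonempty open subset of $(X_H)_{\rm reg}\setminus\{0\}$.

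Then I transfer to $\Affcone_H(Y)$. By Lemma \ref{lem:normalization}, every $\G_a$-action on $X_H$ lifts uniquely to $\Affcone_H(Y)$, and $\nu_H$ is an isomorphism off the vertices, equivariant for the lifted actions. The lifted one-parameter unipotent subgroups generate a subgroup $G\subseteq\SAut(\Affcone_H(Y))$. The preimage $\nu_H^{-1}(\mathcal O\setminus\{0\})$ is open in $\Affcone_H(Y)$ and contained in its regular locus. I expect the main (though mild) obstacle to be showing that this preimage lies in a single $G$-orbit. Points of $\mathcal O\setminus\{0\}$ are connected by products of elements of $\SAut(X_H)$, and the intermediate points might a priori pass through $0$. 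This cannot happen: $0$ is a fixed point of each $\G_a$-action, because the vertex is the unique point over the homogeneous maximal ideal, and any $\G_a$-action preserves the singular locus and the unique $\G_m$-fixed point. The cleaner route is to note that every automorphism of the orbit $\mathcal O$ sends non-vertex points to non-vertex points when $0\notin\mathcal O$. If instead $0\in\mathcal O$, then $\mathcal O$ consists of smooth points and $X_H$ is smooth at $0$. In that case, as in the proof of Proposition \ref{prop:flex-cover}, $X_H$ is normal, so it equals $\Affcone_H(Y)$ and is an affine space, and the claim is immediate.

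So $\nu_H^{-1}(\mathcal O)$ is an open $G$-orbit, hence contained in an open $\SAut(\Affcone_H(Y))$-orbit lying in the regular locus. Generic flexibility then follows from the equivalence stated before the definition, since points of an open $\SAut$-orbit in the regular locus are flexible (Theorem \ref{thm:AFKKZ} applied locally, or directly because the orbit map's tangent vectors span the tangent space).
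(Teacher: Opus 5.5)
Your proposal is correct and follows essentially the paper's route: transversality from Lemma \ref{lem:connected-transversal}, the open orbit $\mathcal O\subseteq X_H$ from Theorem \ref{thm:Per}, regularity of $\mathcal O$ via the $\G_m$-bundle over a smooth $U$, and transfer through $\nu_H$ using Lemma \ref{lem:normalization}, with the smooth-vertex case reduced to an affine space as in Proposition \ref{prop:flex-cover}; your case split on whether $0\in\mathcal O$, rather than the paper's split on whether the vertex of $\Affcone_H(Y)$ is singular, simply makes explicit why $\nu_H^{-1}(\mathcal O)$ is a single orbit of the lifted group. Do discard the aside that every $\G_a$-action (or every homogeneous one) fixes the vertex, which is false, e.g.\ for translations of $\A^N$ or negative-degree homogeneous derivations; your ``cleaner route'' is the argument that actually works, because all intermediate points stay in $\mathcal O$, and $\mathcal O$ avoids $0$ in the case under consideration.
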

\begin{proof}
Lemma \ref{lem:connected-transversal} shows that $\mathcal U$ is transversal. If $X_H$ is an affine space, then by Lemma \ref{lem:normalization} $\Affcone_H(Y)=X_H$ and the conclusion is immediate.
    
By Theorem \ref{thm:Per}, the affine cone $X_H$ has an open $\SAut(X_H)$-orbit $\mathcal O$ containing $\pi_H^{-1}\left(\bigcup_{U\in\mathcal U}U\right)$.
For every $U\in\mathcal U$, the variety $\pi_H^{-1}(U)$ is a $\G_m$-bundle over the smooth variety $U$. Hence, $\mathcal{O}$ meets $(X_H)_{\rm reg}$. Since automorphisms preserve the regular locus, $\mathcal O\subseteq(X_H)_{\rm reg}$.

By Lemma \ref{lem:normalization}, the $\G_a$-actions used to construct $\OO$ lift uniquely to $\Affcone_H(Y)$. If the vertex of $\Affcone_H(Y)$ is singular, then $\nu_H^{-1}(\mathcal O)$ is the required open orbit. If the vertex is smooth, then the proof of Proposition \ref{prop:flex-cover} shows that $\Affcone_H(Y)$ is an affine space.
\end{proof}

\subsection{Danielewski surfaces}\label{subsect:danielewski-prelim}
For a nonconstant polynomial $P\in\Bbbk[x]$, the affine surface
\[
U_P\coloneqq\Spec\frac{\Bbbk[u,v,x]}{(uv-P(x))}
\]
is called a \emph{Danielewski surface}. Surfaces of this type were first introduced in an unpublished preprint of Danielewski \cite{Dan89}, where they arose in connection with the cancellation problem. See also \cite{Fie94} for a systematic treatment of complex affine surfaces with additive group actions. The coordinate functions $u$ and $v$ each define an $\A^1$-fibration $U_P\to\A^1$. If $P$ is square-free, then both $u^{-1}(0)$ and $v^{-1}(0)$ are disjoint unions of affine lines indexed by the roots of $P$. For the geometry and algebraic vector fields on these surfaces, see \cite{Leu16}.

If $P$ is square-free, then the Jacobian criterion shows that $U_P$ is smooth, and the suspension theorem \cite[Theorem 0.2]{AZK12} implies that it is flexible. If $\deg P=1$, then by eliminating $x$, we have $U_P\simeq\A^2$.

\section{Main results and proofs}\label{sect:main}
In this section, we keep the notation from Subsection \ref{subsect:not}. 
\subsection{A common construction of $H$-polar affine open subsets}\label{subsect:danielewski}
Let $\widetilde B\subset \widetilde S_m^n$ be a smooth section satisfying
\[
\widetilde B
\sim
\widetilde Q+d\widetilde F-\sum_{i=1}^n\widetilde E_i,
\qquad
d=
\begin{cases}
    m,\quad &(n\leq m+1)\\
    m+1,\quad &(n\in \{m+2, m+3\})
\end{cases}\ldotp
\]
Let $B\coloneqq \pi_{\ast}(\widetilde B)$. 
If $d=m+1$, then assume that the fiber through $\sigma(\widetilde B\cap \widetilde Q)$ contains none of the points $\overline{p}_1,\dots,\overline{p}_n$, and let $\overline{F}_{\infty}$ be this fiber. If $d=m$, then choose a fiber of $\tau$ $\overline{F}_{\infty}$ disjoint from $\overline{p}_1,\dots,\overline{p}_n$. Let $\infty\coloneqq \tau(\overline{F}_{\infty})$ and choose an affine coordinate $x$ on $\P^1\setminus \{\infty\}$. Since the restriction of the ruling over $\P^1\setminus \{\infty\}$ is a trivial $\A^1$-bundle after removing the negative section, choose a trivialization
\[
\psi\colon \F_m\setminus (\overline{Q}\cup\overline{F}_{\infty})\xrightarrow{\simeq} \A_{x,b}^2\coloneqq \Spec \Bbbk[x,b]
\]
for which the first coordinate is the base coordinate $x$. The second coordinate $b$ is therefore a coordinate along the fibers of the ruling. By translating the fiber coordinate, we may assume that
\[
\psi\left(\sigma_{\ast}(\widetilde B)\cap (\F_m\setminus (\overline{Q}\cup\overline{F}_{\infty}))\right)=V(b).
\]
For every $i$, define $\lambda_i\coloneqq x(\tau(\overline{p}_i))\in \Bbbk$. Then $\psi(\overline{p}_i)=(\lambda_i,0)$. The numbers $\lambda_1,\dots,\lambda_n$ are pairwise distinct since the points $\overline{p}_i$ lie on pairwise distinct fibers of $\tau$. Put $\widetilde F_\infty\coloneqq\sigma_{\ast}^{-1}(\overline F_\infty)$ and $F_{\infty}\coloneqq \pi_{\ast}(\widetilde F_{\infty})$.

For $I\subseteq\{1,\dots,n\}$, put $P_I(x)\coloneqq\prod_{i\in I}(x-\lambda_i)$ and $P_\emptyset(x)\coloneqq1$. Thus, $P_I$ is the square-free polynomial whose roots are the base coordinates of the fibers containing the points $\overline{p}_i$ with $i\in I$. For $q\in\{c_1,\dots,c_n\}$, set
\[
L_q\coloneqq\{i\mid c_i<q\},
\qquad
T_q\coloneqq\{i\mid c_i=q\},
\qquad
R_q\coloneqq\{i\mid c_i>q\},
\]
These three sets form a disjoint partition of $\{1,\dots,n\}$. Define
\begin{equation}\label{eq:gamma-general}
\gamma_q
\coloneqq
a-\sum_{j\in R_q}c_j-q\bigl(d-|R_q|\bigr),
\end{equation}
\begin{equation}\label{eq:D-general}
D_q
\coloneqq
qB
+\sum_{i\in L_q}(q-c_i)E_i
+\sum_{j\in R_q}(c_j-q)E_j'
+\gamma_qF_\infty,
\end{equation}
and $U_q\coloneqq S_m^n\setminus \Supp(D_q)$. The roots of $P_{T_q}$ are precisely the base coordinates of the fibers corresponding to the indices $i$ for which neither $E_i$ nor $E_i'$ is a component of $\Supp(D_q)$.

The next proposition gives the complement of the boundary divisor $D_q$ explicitly; $T_q$ determines the Danielewski surface.

\begin{proposition}\label{prop:danielewski}
If $\gamma_q>0$, then $D_q$ is an effective $\Q$-divisor satisfying $D_q\sim_{\Q} H$, and 
\begin{equation}\label{eq:Danielewski}
S_m^n\setminus\Supp(D_q)
\simeq
\Spec\frac{\Bbbk[x,u,v]}{\bigl(uv-P_{T_q}(x)\bigr)}.
\end{equation}
In particular, $U_q=S_m^n\setminus \Supp(D_q)$ is smooth and flexible.
\end{proposition}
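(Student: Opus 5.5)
Three things must be checked: effectivity of $D_q$, the class identity $D_q\sim_{\Q}H$, and the description of the complement. Flexibility then follows from Subsection \ref{subsect:danielewski-prelim}: $P_{T_q}$ is square-free and $T_q\neq\emptyset$, since $q=c_i$ for some $i$. The surface $U_P$ is smooth by the Jacobian criterion and flexible by the suspension theorem \cite[Theorem 0.2]{AZK12}; if $|T_q|=1$ it is $\A^2$.

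\textbf{Effectivity and linear equivalence.} By construction, every coefficient of $D_q$ is nonnegative. Indeed, $q>0$ by \eqref{eq:basic-positive}; $q-c_i>0$ for $i\in L_q$ and $c_j-q>0$ for $j\in R_q$; and $\gamma_q>0$ by hypothesis. For the class computation we work in $\Cl(S_m^n)_{\Q}=\Q F\oplus\bigoplus\Q E_i$. Push forward $\widetilde B\sim\widetilde Q+d\widetilde F-\sum\widetilde E_i$ and use $\pi_\ast\widetilde Q=0$ to get $B\sim dF-\sum_i E_i$. Also $E_j'\sim F-E_j$ and $F_\infty\sim F$. Collecting coefficients:
\begin{itemize}
\item the coefficient of $F$ is $qd+\sum_{j\in R_q}(c_j-q)+\gamma_q=a$, which is exactly how $\gamma_q$ was defined in \eqref{eq:gamma-general};
\item for $i\in L_q$, the coefficient of $E_i$ is $-q+(q-c_i)=-c_i$;
\item for $i\in T_q$ it is $-q=-c_i$;
\item for $j\in R_q$ it is $-q-(c_j-q)=-c_j$.
\end{itemize}
So $D_q\sim_{\Q}H$ by \eqref{eq:Hclass}.

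\textbf{The complement.} I would compute on $\widetilde S_m^n$ and then descend. The support of $\pi^\ast D_q$ is $\widetilde Q\cup\widetilde B\cup\widetilde F_\infty\cup\bigcup_{L_q}\widetilde E_i\cup\bigcup_{R_q}\widetilde E_j'$; it contains $\widetilde Q$ because $\pi^\ast B$ and $\pi^\ast F_\infty$ have positive coefficient along $\widetilde Q$. Since $\pi$ is an isomorphism off $\widetilde Q$, we get $U_q\simeq\widetilde S_m^n\setminus$ this support, and the singular point is excluded automatically. Now push forward by $\sigma$, which is an isomorphism away from the $\widetilde E_i$.
\begin{itemize}
\item For $j\in R_q$, removing $\widetilde E_j'$ while keeping $\widetilde E_j\setminus\widetilde E_j'$ is the same as removing the whole fiber $\overline F_j$ and regluing $\widetilde E_j\setminus(\text{two points})$. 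The point of $\widetilde E_j$ on $\widetilde B$ is also removed, so nothing survives over $\lambda_j$ except inside the removed part. Hence $x$ becomes invertible at $\lambda_j$, i.e.\ we localize at $x-\lambda_j$.
\item For $i\in L_q$, removing $\widetilde E_i$ and $\widetilde B$ near $\overline p_i$ leaves $\overline F_i\setminus\{\overline p_i\}$. This is a copy of $\A^1\setminus\{0\}$ in $b$, so on this fiber nothing needs blowing up.
\end{itemize}
In coordinates, $\A^2_{x,b}\setminus V(b)$ is $\Spec\Bbbk[x,b^{\pm1}]$.
\begin{itemize}
\item For $i\in T_q$, blowing up $(\lambda_i,0)$ and deleting the strict transform of $V(b)$ is the affine modification $u=b$, $v=P/b$.
\item For $i\in L_q$ one has $b\neq0$ on the fiber.
\item For $j\in R_q$ the fiber is removed but $\widetilde E_j$ minus a point is kept.
\end{itemize}
Globally, I claim the complement is $\Spec\Bbbk[x,b,P_{T_q}(x)/b\,\cdot\,\text{(adjustments)}]$.

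\textbf{Main obstacle.} The delicate step is the bookkeeping for $R_q$ and $L_q$, so that the result is exactly $uv=P_{T_q}(x)$ with no extra localization. I would try the coordinate change $u=b\,P_{R_q}(x)/?$ on the relevant charts. More precisely, I would set $v=b/P_{L_q}(x)$-type and $u=P_{T_q\cup L_q}(x)/b$, then verify directly that the open set is covered by the two charts $\{u\neq0\}$ and $\{v\neq0\}$, each isomorphic to $\A^1\times\A^1_\ast$-type pieces glued as in $U_{P_{T_q}}$. Concretely, I would show the following.
\begin{itemize}
\item The map $(x,u,v)$ is regular on the complement, which reduces to checking poles along the removed curves.
\item The map is bijective onto $V(uv-P_{T_q})$ fiberwise over $x$: over $\lambda\notin\{\lambda_i\}$ both sides are $\A^1_\ast$; over $\lambda_i$ with $i\in T_q$ both are two crossing lines; over $L_q\cup R_q$ the fiber is a single $\A^1_\ast$ matching $uv=P_{T_q}(\lambda)\neq0$.
\end{itemize}
Zariski's Main Theorem then gives an isomorphism, since both surfaces are normal.
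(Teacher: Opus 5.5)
Your effectivity argument and class computation are correct and match the paper's. The gap is in identifying the complement, which is the real content of the proposition and which you leave unresolved. Two of your concrete claims there are wrong.

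First, for $j\in R_q$ only $\widetilde E_j'$ is removed. So $\widetilde E_j$ minus its points on $\widetilde E_j'$ and $\widetilde B$ survives over $\lambda_j$, and the fiber there is $\A^1\setminus\{0\}$, as your own last bullet says. ``Localizing at $x-\lambda_j$'' would give a surface with empty fiber over $\lambda_j$, which is not $uv=P_{T_q}(x)$.

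Second, your tentative coordinates $u=P_{T_q\cup L_q}(x)/b$ and $v=b/P_{L_q}(x)$ have the roles of $L_q$ and $R_q$ interchanged, and neither is regular on $U_q$:
\begin{itemize}
\item For $i\in L_q$, the curve $\widetilde E_i'$ minus its points on $\widetilde Q$ and $\widetilde E_i$ lies in $\pi^{-1}(U_q)$. Along it $b\neq0$, so $b/(x-\lambda_i)$ has a pole there.
\item For $j\in R_q$, in the blow-up chart $b=(x-\lambda_j)s$ one gets $u=P_{T_q\cup L_q}(x)/\bigl((x-\lambda_j)s\bigr)$. This has a pole along $\widetilde E_j\cap\pi^{-1}(U_q)\neq\emptyset$.
\end{itemize}
Note also that regularity must be checked along the curves that remain in $U_q$, not along the removed ones. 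As it stands, your regularity, bijectivity and Zariski's Main Theorem argument has no morphism to apply to.

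The missing step is the paper's. Put $P\coloneqq P_{\{1,\dots,n\}}$. First identify $\widetilde S_m^n\setminus(\widetilde Q\cup\widetilde F_\infty\cup\widetilde B)$ with the affine modification $\Spec\Bbbk[x,b,w]/(bw-P(x))$, where $w=P(x)/b$. In it, $\widetilde E_i$ is $\{x=\lambda_i,\,b=0\}$ and $\widetilde E_i'$ is $\{x=\lambda_i,\,w=0\}$.
\begin{itemize}
\item Deleting $\widetilde E_j'$ for $j\in R_q$ makes $b/(x-\lambda_j)$ regular.
\item Deleting $\widetilde E_i$ for $i\in L_q$ makes $w/(x-\lambda_i)$ regular.
\end{itemize}
Hence $u=b/P_{R_q}(x)$ and $v=w/P_{L_q}(x)=P_{T_q\cup R_q}(x)/b$ are regular, and $uv=P_{T_q}(x)$.

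The inverse is $b=uP_{R_q}(x)$, $w=vP_{L_q}(x)$, and it lands in the open set. At $x=\lambda_j$ with $j\in R_q$, one has $v\neq0$ because $uv=P_{T_q}(\lambda_j)\neq0$, and $P_{L_q}(\lambda_j)\neq0$, so $w\neq0$. The case $L_q$ is symmetric. This gives the isomorphism directly, without Zariski's Main Theorem, though your fiberwise argument would also work with these $u,v$.

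A minor slip: when $d=m$ one has $\widetilde B\cdot\widetilde Q=0$, so $\pi^\ast B$ has zero coefficient along $\widetilde Q$. The curve $\widetilde Q$ lies in $\Supp(\pi^\ast D_q)$ only because of the term $\gamma_qF_\infty$, which is enough.
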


\begin{proof}
Since $B\sim_{\Q}dF-\sum_iE_i$ and $E_i'\sim_{\Q}F-E_i$, the coefficient of $E_i$ in $D_q$ is $-c_i$ for every $i$, while the coefficient of $F$ in $D_q$ is
\[
qd+\sum_{j\in R_q}(c_j-q)+\gamma_q=a.
\]
Since $q>0$, $\gamma_q>0$, $q-c_i>0$ for $i\in L_q$, and $c_j-q>0$ for $j\in R_q$, the divisor $D_q$ is effective, and $D_q\sim_{\Q}H$.

Put $P(x)\coloneqq P_{\{1,\dots,n\}}(x)$. The affine modification of $\A^2_{x,b}$ along the divisor $(b=0)$ with center defined by $(b,P(x))$ is
\[
\Spec\Bbbk\left[x,b,\frac{P(x)}b\right]
\simeq
\Spec\frac{\Bbbk[x,b,w]}{(bw-P(x))}.
\]
After removing the curves $\widetilde E_i$ for $i\in L_q$ and $\widetilde E_j'$ for $j\in R_q$, the functions
\[
u\coloneqq\frac{b}{P_{R_q}(x)},
\qquad
v\coloneqq\frac{w}{P_{L_q}(x)}
\]
are regular and satisfy $uv=P_{T_q}(x)$. Conversely, $b=uP_{R_q}(x)$ and $w=vP_{L_q}(x)$, and these give the isomorphism on the inverse image of $U_q$ in $\widetilde S_m^n$. The curve $F_{\infty}$ contains the singular point $\pi(\widetilde Q)$, since $\overline{F}_{\infty}$ meets the negative section $\overline{Q}$. Consequently, the inverse image of $U_q$ under $\pi$ shows that $\pi$ restricts to an isomorphism onto $U_q$. This proves \eqref{eq:Danielewski}. Since $T_q\neq\emptyset$ and the $\lambda_i$ are pairwise distinct, $P_{T_q}$ is nonconstant and square-free. The final assertion follows from Subsection \ref{subsect:danielewski-prelim}.
\end{proof}

\subsection{Proof of Theorem \ref{thm:main}}\label{subsect:proof-main}
We prove Theorem \ref{thm:main} by applying Proposition \ref{prop:danielewski} throughout this subsection.
\medskip

\noindent\textbf{The case $0\leq n\leq m+1$.}
Assume first that $1\leq n\leq m+1$. Choose a fiber $\overline F_\infty$ disjoint from the points $\overline{p}_i$, and put $\widetilde F_{\infty}\coloneqq \sigma_{\ast}^{-1}(\overline F_{\infty})$. Write $\overline{p}_i=(\lambda_i,\eta_i)$ on $\F_m\setminus(\overline Q\cup\overline F_\infty)\simeq\A^2_{x,y}$.
The evaluation map
\[
H^0\bigl(\P^1,\OO_{\P^1}(m)\bigr)
\longrightarrow
\bigoplus_{i=1}^n\Bbbk,
\qquad
h\longmapsto\bigl(h(\lambda_1),\dots,h(\lambda_n)\bigr),
\]
is surjective. Choose $h$ such that $h(\lambda_i)=\eta_i$ for every $i$, and let $\widetilde B$ be the strict transform of the closure of the graph $y=h(x)$. Then
\[
\widetilde B
\sim
\widetilde Q+m\widetilde F-\sum_{i=1}^n\widetilde E_i.
\]
Put $B\coloneqq\pi_{\ast}(\widetilde B)$ and
\begin{equation}\label{eq:delta-small}
\delta
\coloneqq
\pi^{\ast}H\cdot\widetilde B
=a-\sum_{i=1}^nc_i
>0.
\end{equation}

\begin{lemma}\label{lem:A2-small}
Let $\overline F_0$ be a fiber of $\tau$, disjoint from the points $\overline{p}_i$, and let $\widetilde F_0\coloneqq \sigma_{\ast}^{-1}(\overline F_0)$, and put  $F_0\coloneqq\pi_{\ast}(\widetilde F_0)$. Then
\[
D_0(F_0)
\coloneqq
\delta F_0+\sum_{i=1}^nc_iE_i'
\sim_{\Q}H,
\qquad
U_0(F_0)
\coloneqq
S_m^n\setminus\Supp(D_0(F_0))
\simeq\A^2.
\]
\end{lemma}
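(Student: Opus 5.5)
The plan is to check the linear equivalence directly from the class formulas, and then to write down explicit coordinates identifying the complement with $\A^2$ by an elementary transformation of the ruling over $\A^1_x$.

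\textbf{Step 1: linear equivalence and effectivity.} Since $F_0\sim_{\Q}F$ and $E_i'\sim_{\Q}F-E_i$, we get
\[
D_0(F_0)\sim_{\Q}\Bigl(\delta+\sum_{i=1}^n c_i\Bigr)F-\sum_{i=1}^n c_iE_i=aF-\sum_{i=1}^n c_iE_i,
\]
using \eqref{eq:delta-small}. By \eqref{eq:Hclass} this is $\sim_{\Q}H$. Effectivity holds because $\delta>0$ by \eqref{eq:delta-small} and $c_i>0$ by \eqref{eq:basic-positive}.

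\textbf{Step 2: reduce to $\widetilde S_m^n$.} The fiber $\overline F_0$ meets the negative section $\overline Q$, so $F_0$ passes through the singular point $\pi(\widetilde Q)$. Hence $\pi^{-1}(\Supp D_0(F_0))=\widetilde Q\cup\widetilde F_0\cup\bigcup_i\widetilde E_i'$, and $\pi$ restricts to an isomorphism
\[
\widetilde U\coloneqq\widetilde S_m^n\setminus\Bigl(\widetilde Q\cup\widetilde F_0\cup\bigcup_{i=1}^n\widetilde E_i'\Bigr)\xrightarrow{\simeq}U_0(F_0).
\]
If $\overline F_0\neq\overline F_\infty$, I first change the affine coordinate on $\P^1$ so that $\overline F_0$ is the fiber at infinity. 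I then choose the trivialization $\F_m\setminus(\overline Q\cup\overline F_0)\simeq\A^2_{x,y}$ and reinterpolate $h$ as above, with $h(\lambda_i)=\eta_i$. This is harmless, since only the interpolation property of $h$ is used.

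\textbf{Step 3: explicit coordinates.} Set $P(x)=\prod_{i=1}^n(x-\lambda_i)$ and define
\[
y'\coloneqq\frac{y-h(x)}{P(x)}.
\]
I claim that $(x,y')\colon\widetilde U\to\A^2$ is an isomorphism. Away from $\bigcup_i\widetilde E_i$, the map $\sigma$ is an isomorphism and $P$ is invertible (the fibers $x=\lambda_i$ have been removed), so $y'$ is regular there. Near $\widetilde E_i\setminus\widetilde E_i'$ we use the blow-up chart with coordinates $x$ and $t_i=(y-\eta_i)/(x-\lambda_i)$. Since $h(\lambda_i)=\eta_i$, we can write $h(x)-\eta_i=(x-\lambda_i)g_i(x)$, and then
\[
y'=\frac{t_i-g_i(x)}{\prod_{j\neq i}(x-\lambda_j)},
\]
which is regular there. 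In the other direction, given $(x,y')\in\A^2$, set $y=h(x)+P(x)y'$. This gives a morphism $\A^2\to\F_m\setminus(\overline Q\cup\overline F_0)$, and it lifts to the blow-up because $t_i=g_i(x)+y'\prod_{j\neq i}(x-\lambda_j)$ is regular. Its image avoids $\widetilde E_i'$, since on $\widetilde E_i'$ the coordinate $t_i$ is infinite. The two maps are mutually inverse on the dense open set $x\notin\{\lambda_i\}$, hence everywhere by separatedness. This is the elementary transformation along the $n$ fibers.

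\textbf{Main obstacle.} The main point requiring care is Step 3 near the exceptional curves, namely showing that the inverse map lands in $\widetilde U$ and is surjective onto it. I would handle this in the local chart for each $\widetilde E_i$ as above. There, $\widetilde E_i\setminus\widetilde E_i'$ is the line $x=\lambda_i$ with coordinate $t_i$, and it corresponds bijectively to the values of $y'$ via the affine-linear relation between $t_i$ and $y'$.
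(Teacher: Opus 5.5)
Your proposal is correct and takes essentially the paper's approach. The paper writes down the morphism $(x,z)\mapsto(x,g(x)+P(x)z)$, which is exactly the inverse of your coordinate $y'=(y-h(x))/P(x)$, and then uses that $F_0$ passes through the singular point; your chart computation near each $\widetilde E_i\setminus\widetilde E_i'$ supplies the verification the paper only asserts.
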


\begin{proof}
The $\Q$-linear equivalence follows from $E_i'\sim_{\Q}F-E_i$ and \eqref{eq:delta-small}.

Choose an isomorphism $\F_m\setminus(\overline Q\cup\overline F_0)\simeq \A^2_{x,y}$, and write $\overline p_i=(\lambda_i,\eta_i)$ in these coordinates.
Choose $g\in\Bbbk[x]$ such that
$g(\lambda_i)=\eta_i$ for every $i$, and put
\[
P(x)\coloneqq\prod_{i=1}^n(x-\lambda_i).
\]
The morphism
\[
\A^2_{x,z}\longrightarrow\A^2_{x,y},
\qquad
(x,z)\longmapsto\bigl(x,g(x)+P(x)z\bigr),
\]
realizes the affine modification obtained by blowing up the points $(\lambda_i,\eta_i)$ and deleting the strict transforms of the fibers $(x=\lambda_i)$. Therefore,
\[
\widetilde S_m^n\setminus
\left(
\widetilde Q\cup\widetilde F_0
\cup\bigcup_{i=1}^n\widetilde E_i'
\right)
\simeq
\A^2.
\]
Since $F_0$ contains the singular point $\pi(\widetilde Q)$,
the morphism $\pi$ restricts to an isomorphism from the open
subset above onto $U_0(F_0)$. Hence, $U_0(F_0)\simeq\A^2$.
\end{proof}

Let $M\coloneqq\max_i c_i$. For $q\in\{c_1,\dots,c_n\}$ and $r\coloneqq|R_q|$, one has  $r\leq n-1\leq m$ and
\[
\gamma_q
=a-\sum_{j\in R_q}c_j-q(m-r)
\geq a-mM
>0
\]
by \eqref{eq:basic-positive}; indeed, $M<\alpha$ and $a=m\alpha$. Proposition \ref{prop:danielewski} therefore gives an $H$-polar flexible affine open subset $U_q$ for every distinct coefficient value $q$.

Choose two distinct fibers $\overline F_0,\overline F_1$ disjoint from the points $\overline{p}_i$, and put $\widetilde F_j\coloneqq \sigma_{\ast}^{-1}(\overline F_j)$, $F_j\coloneqq\pi_{\ast}(\widetilde F_j)$ for $j=0,1$. The two fibers meet on $S_m^n$ only at the singular point, which lies on every $E_i'$. Moreover, for $q=c_i$, neither $E_i$ nor $E_i'$ occurs in the boundary divisor $D_q$. The section $\sigma_{\ast}(\widetilde B)$ meets $\overline{F}_i$ transversely at $\overline{p}_i$, and therefore its strict transform does not meet $\widetilde E_i'$. The other boundary components lie over fibers distinct from $\overline{F}_i$. Hence,
\[
(S_m^n)_{\rm reg}\setminus\bigcup_{i=1}^nE_i'
\subseteq
U_0(F_0)\cup U_0(F_1),
\qquad
E_i'\cap(S_m^n)_{\rm reg}\subseteq U_{c_i}.
\]
Consequently,
\begin{equation}\label{eq:cover-small}
(S_m^n)_{\rm reg}
=U_0(F_0)\cup U_0(F_1)\cup\bigcup_qU_q.
\end{equation}

If $n=0$, then $H\sim_{\Q}\lambda F$ for some $\lambda>0$. The complements of the images of two distinct fibers are $H$-polar affine planes and cover $\P(1,1,m)_{\rm reg}$.
\medskip

\noindent\textbf{The case $n=m+2$.}
Recall from Subsection \ref{subsect:danielewski} that
\[
\widetilde B\sim \widetilde Q+(m+1)\widetilde F-\sum_{i=1}^{m+2}\widetilde E_i.
\]

\begin{lemma}\label{lem:pencil-middle}
The linear system $|\widetilde B|$ is a base-point-free pencil.
\end{lemma}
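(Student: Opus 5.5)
The plan is to describe $|\widetilde B|$ explicitly in the affine chart $\psi$ of Subsection~\ref{subsect:danielewski}, count dimensions, and then use $\widetilde B^2=0$ to rule out base points.

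\textbf{Step 1: dimension count.} Sections of $\OO_{\F_m}(\overline Q+(m+1)\overline F)$ correspond to $H^0(\P^1,\tau_{\ast}\OO(\overline Q+(m+1)\overline F))=H^0(\OO_{\P^1}(m+1)\oplus\OO_{\P^1}(1))$, which has dimension $m+4$. In the chart $\A^2_{x,y}=\F_m\setminus(\overline Q\cup\overline F_\infty)$ these curves are exactly the zero loci
\[
c(x)\,y=h(x),\qquad \deg c\leq1,\ \deg h\leq m+1 .
\]
The members of $|\widetilde B|$ are the strict transforms of those curves passing through $\overline p_i=(\lambda_i,\eta_i)$ for $i=1,\dots,m+2$. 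This imposes the $m+2$ linear conditions $c(\lambda_i)\eta_i=h(\lambda_i)$ on the $m+4$ coefficients. For general points these conditions are independent: for a fixed $c$, the map $h\mapsto (h(\lambda_i))_i$ is already an isomorphism $\Bbbk[x]_{\leq m+1}\to\Bbbk^{m+2}$. Hence $h^0(\OO(\widetilde B))=2$ and $|\widetilde B|$ is a pencil. More precisely, for every $c$ there is a unique $h=h_c$, so the pencil is parametrized by $c\in\Bbbk[x]_{\leq1}$.

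\textbf{Step 2: no member contains $\widetilde Q$.} Since $c\neq0$ for every nonzero member, no member contains $\overline Q$; alternatively, a member of the form $\overline Q+\sum(\text{fibers})$ would need $m+2$ fibers, which is too many. Hence $\widetilde Q$ is not a component of any member. Also $\widetilde B\cdot\widetilde Q=1$ and
\[
\widetilde B^2=-m+2(m+1)-(m+2)=0 .
\]

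\textbf{Step 3: no fixed component, hence no base points.} A base point of a pencil with $\widetilde B^2=0$ can only arise from a fixed component: two members without a common component meet in $\widetilde B^2=0$ points, so they are disjoint. So I will show there is no fixed component. By Step 2, a fixed component must be either $\widetilde E_i$, $\widetilde E_i'$, or the strict transform of a fiber. Taking $c\equiv1$ gives the smooth irreducible section $y=h_1(x)$, which contains no fiber and no $\widetilde E_i'$. Its strict transform meets each $\widetilde E_i$ at a single point (the tangent direction of the graph), so it does not contain $\widetilde E_i$ either. Taking $c=x-\mu$ for general $\mu$ gives a second member. Then either it is irreducible with a different point of intersection with $\widetilde Q$ (the sections $c(x)y=h(x)$ meet $\overline Q$ over the root of $c$), or it splits off the fiber $x=\mu$. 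In both cases it has no component in common with the first member. Hence there is no fixed component, the two members are disjoint, and $|\widetilde B|$ is base-point-free. As a consistency check, the pencil map $\widetilde S^{m+2}_m\to\P^1$ is then a morphism whose general fiber is a smooth rational curve, namely a section of $\tau$ meeting $\widetilde Q$ once.

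\textbf{Main obstacle.} I expect the hardest part to be the genericity bookkeeping in Step 3. One must ensure that no member, such as a member $c=x-\lambda_i$ that splits off the fiber through $\overline p_i$, forces every member to contain $\widetilde E_i$ or $\widetilde E_i'$. Equivalently, the multiplicity-one condition at each $\overline p_i$ must not be forced to be singular. I would handle this by noting that one explicit member, the graph $y=h_1(x)$, is smooth at each $\overline p_i$ and transverse to $\overline F_i$. This single member already excludes every candidate fixed component, so generality of the $p_i$ is only needed for the independence of the conditions in Step 1.
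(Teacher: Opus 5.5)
Your overall strategy is sound: interpolation gives $h^0=2$, and then no common component together with $\widetilde B^2=0$ forces disjoint members. But Step 3 has a genuine gap, and it sits exactly where you claim generality is not needed. Step 1 uses only that the $\lambda_i$ are distinct, and under that hypothesis alone the lemma is false.

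Suppose the $\overline p_i$ lie on a curve of $|\overline Q+m\overline F|$, i.e.\ on a graph $y=g(x)$ with $\deg g\leq m$. Then $h_1=g$, and uniqueness of interpolation gives $h_c=c\,g$ for every $c$. So the $c\equiv1$ member is $\overline{\{y=g\}}+\overline F_\infty$, and the $c=x-\mu$ member is $\overline F_\mu+\overline{\{y=g\}}$. Hence $|\widetilde B|=\widetilde G+|\widetilde F|$, where $\widetilde G\sim\widetilde Q+m\widetilde F-\sum_i\widetilde E_i$ is the strict transform of the graph (a $(-2)$-curve), and every point of $\widetilde G$ is a base point.

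In this configuration three of your claims fail:
\begin{enumerate}
\item The reduction ``a fixed component must be $\widetilde E_i$, $\widetilde E_i'$, or a fiber'' is wrong, since Step 2 excludes only $\widetilde Q$ and not other horizontal curves.
\item The $c\equiv1$ member is not an irreducible section.
\item In your second alternative, the residual of $\overline F_\mu$ is exactly the component shared with the first member, so ``in both cases it has no component in common'' is false.
\end{enumerate}

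The missing input is the genericity condition that the $\overline p_i$ do not lie on a member of $|\overline Q+m\overline F|$. Since $\dim|\overline Q+m\overline F|=m+1$, general points satisfy it. In your coordinates it says that the leading coefficient $\sum_i\eta_i/\prod_{j\neq i}(\lambda_i-\lambda_j)$ of $h_1$ is nonzero, i.e.\ $\deg h_1=m+1$.

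With this condition the argument goes through:
\begin{enumerate}
\item The $c\equiv1$ member is an irreducible section meeting $\overline Q$ over $\infty$.
\item For $\mu\notin\{\lambda_i\}$ the splitting alternative cannot occur. The residual $y=h_c(x)/(x-\mu)$ would be a polynomial of degree at most $m$ through all $m+2$ points, forcing $\deg h_1\leq m$.
\item So the $c=x-\mu$ member is an irreducible section meeting $\overline Q$ over $\mu$. The two members share no component, and $\widetilde B^2=0$ finishes as you intended.
\end{enumerate}

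With this repair your argument is an explicit version of the paper's. The paper uses generality to get two members that are smooth and mutually transverse at the $\overline p_i$ (tacitly without common component), and counts $A^2=m+2$ on $\F_m$. Working upstairs with $\widetilde B^2=0$ spares you the transversality check, and your coordinates isolate exactly which genericity is needed.

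Separately, the chart $\psi$ of Subsection~\ref{subsect:danielewski} is the wrong one to use. It is normalized by $\widetilde B$ itself, so $\eta_i=0$ there and the degree bounds in Step 1 fail. Use a standard trivialization off any fiber avoiding the $\overline p_i$, as in the case $n\leq m+1$.
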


\begin{proof}
Put $A\coloneqq\overline Q+(m+1)\overline F$. One has $h^0(\F_m,\OO_{\F_m}(A))=m+4$ and $A^2=m+2$. The $m+2$ general points impose independent conditions on $|A|$, thus the subsystem through them is a pencil. Generality also implies that two generators are smooth at the points $\overline{p}_i$ and meet there transversely. Their total intersection number is $m+2$; hence, the $\overline{p}_i$ are the complete base locus of the pencil, all with multiplicity one. After blowing up the $\overline{p}_i$, the strict transforms of the two generators are disjoint. Therefore, $|\widetilde B|$ is base-point-free.
\end{proof}

For $q\in\{c_1,\dots,c_{m+2}\}$, recall that $\gamma_q$ is defined as in \eqref{eq:gamma-general}.

\begin{lemma}\label{lem:positive-values}
Let $q_1<\cdots<q_s$ be the distinct values among $c_1,\dots,c_{m+2}$, and let $t_k\coloneqq|T_{q_k}|$. Then
\[
\gamma_{q_1}>0,
\qquad
\gamma_{q_{k+1}}-\gamma_{q_k}
=-\left(\sum_{\ell\leq k}t_\ell-1\right)(q_{k+1}-q_k)
\leq0.
\]
\end{lemma}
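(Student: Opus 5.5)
We need $\gamma_{q_1}>0$ and the difference formula, where $n=m+2$ and $d=m+1$. The plan is to treat the difference formula as a direct computation from \eqref{eq:gamma-general}, and to get $\gamma_{q_1}>0$ from an intersection number against a curve in the pencil $|\widetilde B|$.

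\textbf{The difference formula.} For consecutive values $q_k<q_{k+1}$ we have
\[
R_{q_k}=T_{q_{k+1}}\sqcup R_{q_{k+1}}.
\]
Set $r_k\coloneqq|R_{q_k}|$, so that $r_k=m+2-\sum_{\ell\le k}t_\ell$. Substituting into \eqref{eq:gamma-general} gives
\[
\gamma_{q_{k+1}}-\gamma_{q_k}
=\sum_{j\in T_{q_{k+1}}}c_j-q_{k+1}(d-r_{k+1})+q_k(d-r_k).
\]
Now use $\sum_{j\in T_{q_{k+1}}}c_j=t_{k+1}q_{k+1}$ and $r_{k+1}=r_k-t_{k+1}$. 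The $t_{k+1}q_{k+1}$ terms cancel, leaving
\[
\gamma_{q_{k+1}}-\gamma_{q_k}=-(d-r_k)(q_{k+1}-q_k).
\]
With $d=m+1$ we get $d-r_k=\sum_{\ell\le k}t_\ell-1$, which is exactly the claimed formula. This quantity is $\ge0$ because $t_1\ge1$, so the difference is $\le0$. This step is routine bookkeeping.

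\textbf{Positivity of $\gamma_{q_1}$.} Here $L_{q_1}=\emptyset$ and $R_{q_1}$ consists of all indices outside $T_{q_1}$. Hence
\[
\gamma_{q_1}=a-\sum_{j\notin T_{q_1}}c_j-q_1(m+1-(m+2-t_1))=a-\sum_{i=1}^{m+2}c_i+q_1 .
\]
By \eqref{eq:Hpullback}, $\pi^\ast H\cdot\widetilde B=a-\sum_i c_i$; the $\widetilde Q$ term contributes $\alpha(-m)+a\cdot 1$ after accounting for $\widetilde Q\cdot\widetilde F=1$, and I will verify this carefully. This intersection number is positive because $H$ is ample and $\widetilde B$ is not contracted by $\pi$. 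Since $q_1=\min_i c_i>0$ by \eqref{eq:basic-positive}, it follows that $\gamma_{q_1}>0$.

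\textbf{Main point to check.} The delicate part is the intersection computation $\pi^\ast H\cdot\widetilde B$. We have $\widetilde Q\cdot\widetilde B=\widetilde Q^2+d=-m+m+1=1$. So $\pi^\ast H\cdot\widetilde B=\alpha\cdot1+a(1)$? I will redo this properly using $\pi^\ast H\cdot\widetilde Q=0$. Writing $\widetilde B=\widetilde Q+(m+1)\widetilde F-\sum\widetilde E_i$ gives
\[
\pi^\ast H\cdot\widetilde B=0+(m+1)\,\pi^\ast H\cdot\widetilde F-\sum_i c_i .
\]
Next, $\pi^\ast H\cdot\widetilde F=\alpha\,\widetilde Q\cdot\widetilde F=\alpha$. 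So
\[
\pi^\ast H\cdot\widetilde B=(m+1)\alpha-\sum_i c_i=a+\alpha-\sum_i c_i .
\]
Therefore the previous expression needs correcting: $\gamma_{q_1}=a-\sum_i c_i+q_1=\pi^\ast H\cdot\widetilde B-\alpha+q_1$. This is not obviously positive, so I will use a second approach instead.

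The alternative is to write
\[
a-\sum_i c_i+q_1=\sum_{i\neq i_0}(\alpha-c_i)+(m-(m+1))\alpha+\dots,
\]
but that does not work cleanly either. The better route is to pick a curve whose $\pi^\ast H$-degree equals exactly $a-\sum_{i\neq i_0}c_i$, where $i_0\in T_{q_1}$. The natural candidate is a section
\[
\widetilde C\sim\widetilde Q+m\widetilde F-\sum_{i\neq i_0}\widetilde E_i,
\]
namely the strict transform of a graph $y=h(x)$ with $\deg h\le m$ through the $m+1$ points $\overline p_i$, $i\ne i_0$. Such an $h$ exists by surjectivity of the evaluation map used in the case $n\le m+1$. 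Its degree is
\[
\pi^\ast H\cdot\widetilde C=m\alpha-\sum_{i\neq i_0}c_i=\gamma_{q_1}.
\]
Because $\widetilde C$ is an irreducible curve that is not $\pi$-exceptional, ampleness of $H$ gives $\gamma_{q_1}>0$.
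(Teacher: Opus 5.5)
Your final argument is correct and is the paper's: the difference formula is the bookkeeping from \eqref{eq:gamma-general} via $R_{q_k}=T_{q_{k+1}}\sqcup R_{q_{k+1}}$. Positivity comes from $\gamma_{q_1}=a-\sum_{j\neq i_0}c_j$ being the $\pi^{\ast}H$-degree of the strict transform of the section of class $\overline Q+m\overline F$ through the $\overline p_j$, $j\neq i_0$. You should state, as the paper does by generality, that this section misses $\overline p_{i_0}$ (otherwise $\widetilde C+\widetilde E_{i_0}$ represents the class $\widetilde Q+m\widetilde F-\sum_{j\neq i_0}\widetilde E_j$, which still has positive degree), and delete the abandoned first attempt, whose claim $\pi^{\ast}H\cdot\widetilde B=a-\sum_i c_i$ is false for $d=m+1$.
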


\begin{proof}
Choose $i\in T_{q_1}$. 
Since $h^0(\F_m,\OO_{\F_m}(\overline Q+m\overline F))=m+2$, there exists a section through the points $\overline{p}_j$ with $j\neq i$. By the generality assumption, this section is smooth and does not pass through $\overline{p}_i$. Its strict transform therefore has class
\[
\widetilde\Gamma_i
\sim
\widetilde Q+m\widetilde F-\sum_{j\neq i}\widetilde E_j.
\]
 Moreover,
\[
\gamma_{q_1}
=\pi^{\ast}H\cdot\widetilde\Gamma_i
=a-\sum_{j\neq i}c_j
>0.
\]
The formula for the successive differences follows directly from \eqref{eq:gamma-general}.
\end{proof}

Let $\mathcal C_H\coloneqq\{q\in\{c_1,\dots,c_{m+2}\}\mid\gamma_q>0\}$. Since $\widetilde B\cdot\widetilde Q=1$ and $\widetilde B\cdot\widetilde E_i=1$ for every $i$, both $\widetilde Q$ and the curves $\widetilde E_i$ are sections of the morphism defined by $|\widetilde B|$. Let $\mathcal B\subseteq|\widetilde B|$ be the open subset of smooth irreducible members $\widetilde B'$ satisfying the following conditions. For such a member, let $\overline{F}_{\widetilde B'}$ be the fiber of $\tau$ through $\sigma(\widetilde B'\cap \widetilde Q)$, and let $\widetilde F_{\widetilde B'}\coloneqq \sigma_{\ast}^{-1}(\overline{F}_{\widetilde B'})$.
\begin{enumerate}[label=\textup{(\arabic*)}]
\item $\sigma_{\ast}(\widetilde B')$ meets $\overline{F}_i$ transversely at $\overline{p}_i$ for every $i$;
\item the fiber $\overline F_{\widetilde B'}$ contains none of the points $\overline{p}_i$.
\end{enumerate}

Let $f_{\widetilde B}\colon\widetilde S_m^{m+2}\to\P^1$ be the morphism defined by $|\widetilde B|$. Since $\widetilde Q$ is a section of $f_{\widetilde B}$, the finite morphism in the Stein factorization of $f_{\widetilde B}$ has a section. The Stein factor is integral, and the image of the section is a closed irreducible subset of the same dimension; hence, the finite morphism is an isomorphism. Thus, the fibers of $f_{\widetilde B}$ are connected. By Bertini's theorem, a general member of $|\widetilde B|$ is smooth.
Since the fibers of $f_{\widetilde B}$ are connected, every smooth member is irreducible. For each $i$, tangency to $\overline{F}_i$ at $\overline{p}_i$ defines at most one member of the pencil, since it is equivalent to the strict transform passing through $\widetilde E_i\cap \widetilde E_i'$. Likewise, at most one member satisfies $\overline{F}_{\widetilde B'}=\overline{F}_i$, since $\widetilde Q$ is a section of $f_{\widetilde B}$. Thus, removing these finitely many members gives a nonempty open subset $\mathcal{B}\subseteq |\widetilde B|$.

For $q\in\mathcal C_H$ and $\widetilde B\in\mathcal B$, let $D_{q,\widetilde B}$ and $U_{q,\widetilde B}$ be the divisor and the affine open subset obtained from Proposition \ref{prop:danielewski}, and set
\[
\mathcal{U}_H\coloneqq \{U_{q,\widetilde B'}\mid q\in \mathcal C_H, \widetilde B'\in \mathcal B\},\quad V_H
\coloneqq
\bigcup_{U\in\mathcal U_H}U.
\]

\begin{lemma}\label{lem:union-middle}
One has
\[
(S_m^{m+2})_{\rm reg}\setminus V_H
=
\bigcup_{\gamma_{c_i}\leq0}\bigl(E_i'\cap(S_m^{m+2})_{\rm reg}\bigr).
\]
\end{lemma}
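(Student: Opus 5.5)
We prove the two inclusions separately, working with the explicit boundary
\[
\Supp(D_{q,\widetilde B'})=B'\cup\bigcup_{i\in L_q}E_i\cup\bigcup_{j\in R_q}E_j'\cup F_{\widetilde B'},
\qquad B'\coloneqq\pi_{\ast}(\widetilde B'),
\]
where $q\in\mathcal C_H$, $\widetilde B'\in\mathcal B$ and $F_{\widetilde B'}\coloneqq\pi_\ast(\widetilde F_{\widetilde B'})$. Since $\gamma_q>0$ for $q\in\mathcal C_H$, Proposition \ref{prop:danielewski} applies to every such pair, so each $U_{q,\widetilde B'}$ is a well-defined member of $\mathcal U_H$.

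\emph{The inclusion $\supseteq$.} Fix $i$ with $\gamma_{c_i}\leq 0$. For any $q\in\mathcal C_H$ we have $q\neq c_i$, so $i\in L_q$ or $i\in R_q$. If $i\in R_q$, then $E_i'\subseteq\Supp(D_{q,\widetilde B'})$ directly. If $i\in L_q$, then $E_i\subseteq\Supp(D_{q,\widetilde B'})$. In that case $\widetilde E_i'$ meets $\widetilde Q$ and $\widetilde E_i$ and nothing else in the relevant region. Hence $E_i'\setminus(E_i\cup\{\text{singular point}\})$ is contained in $E_i'\setminus E_i$, and it still lies in $U_{q,\widetilde B'}$ unless it is covered by another component. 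This is not enough, so we must argue via the Danielewski coordinates instead. By the proof of Proposition \ref{prop:danielewski}, the fiber $x=\lambda_i$ with $i\in L_q$ corresponds in $\Spec\Bbbk[x,u,v]/(uv-P_{T_q})$ to the locus where $P_{T_q}(\lambda_i)\neq0$. There $u$ is invertible, $v=P_{T_q}/u$, and the line $\{x=\lambda_i\}$ is exactly the image of $\widetilde E_i'\setminus(\widetilde E_i\cup \widetilde Q)$. So this case needs care.

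Let me re-examine this step, because the tentative argument above says $E_i'$ meets $U_q$ when $i\in L_q$, which would contradict the lemma. The correct reading should come from the trivialization. With $b$ as the fiber coordinate, we have $B'=V(b)$. After blowing up $\overline p_i=(\lambda_i,0)$, the chart $w=P(x)/b$ contains the open part of $\widetilde E_i$, namely $w$ arbitrary on $x=\lambda_i$. The chart with coordinate $b$ contains $\widetilde E_i'$. Removing $E_i$ for $i\in L_q$ is implemented by dividing $w$ by $P_{L_q}$, and the fiber $x=\lambda_i$ in $U_q$ is $\{uP_{R_q}(\lambda_i)=b,\ v\text{ free}\}$. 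I will redo this bookkeeping carefully, but the expected outcome is the following: for $i\in L_q$ the surviving curve over $x=\lambda_i$ is $E_i'$, while for $i\in R_q$ it is $E_i$. This means that $E_i'$ meets $U_q$ exactly when $i\in L_q\cup T_q$. Therefore the inclusion $\supseteq$ has to use the sign pattern from Lemma \ref{lem:positive-values}, which says that $\gamma$ is nonincreasing in $q$. If $\gamma_{c_i}\leq0$, then every $q\in\mathcal C_H$ satisfies $q<c_i$, so $i\in R_q$, and $E_i'$ is removed. This is the correct and clean argument for $\supseteq$: by monotonicity, $\mathcal C_H$ is an initial segment $\{q_1,\dots,q_k\}$.

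\emph{The inclusion $\subseteq$.} Let $p\in(S_m^{m+2})_{\rm reg}$ with $p$ not on any $E_i'$ for which $\gamma_{c_i}\leq0$. We treat three cases in order.
\begin{enumerate}[label=\textup{(\roman*)}]
\item Suppose $p$ lies on no $E_j$, no $E_j'$, and no $B'$-independent curve. Take $q=q_1$. Then $p\notin E_j$ for $j\in L_{q_1}=\emptyset$ and $p\notin E_j'$. We need $\widetilde B'\in\mathcal B$ with $p\notin B'\cup F_{\widetilde B'}$. The pencil is base-point-free, and $\widetilde Q$ is a section of $f_{\widetilde B}$, so $\widetilde B'\mapsto F_{\widetilde B'}$ is nonconstant. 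Hence a general member of the open set $\mathcal B$ avoids $p$ and has its fiber $F_{\widetilde B'}$ avoiding $p$.
\item Suppose $p\in E_j\setminus E_j'$. Take $q=q_1$: if $c_j>q_1$, then $j\in R_{q_1}$, so only $E_j'$ is removed and $p$ survives; if $c_j=q_1$, then nothing over $\lambda_j$ is removed except $B'$. Since $\widetilde E_j$ is a section of $f_{\widetilde B}$, a general $\widetilde B'$ misses $\pi^{-1}(p)$.
\item Suppose $p\in E_j'$ with $\gamma_{c_j}>0$. Take $q=c_j\in\mathcal C_H$, so that $j\in T_q$. The points of $E_j'$ removed by $B'$ are only its intersection with $B'$. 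By condition (1), this intersection is empty: $\sigma_\ast(\widetilde B')$ is transverse to $\overline F_j$ at $\overline p_j$, so its strict transform misses $\widetilde E_j'$. The fiber $F_{\widetilde B'}$ differs from $E_j'$ by condition (2), and it meets $E_j'$ only at the singular point.
\end{enumerate}

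The main obstacle is the chart bookkeeping identifying which of $E_i$ and $E_i'$ survives in $U_q$ according to whether $i\in L_q$ or $i\in R_q$. I will settle it directly from the definition \eqref{eq:D-general}: $E_i$ is a boundary component exactly when $i\in L_q$, and $E_i'$ is a boundary component exactly when $i\in R_q$. Reading the support off \eqref{eq:D-general} this way sidesteps the chart computation entirely.
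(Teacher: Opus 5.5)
Your final argument is correct and is essentially the paper's proof. For $\supseteq$, the monotonicity of $\gamma$ (Lemma \ref{lem:positive-values}) forces $q<c_i$, hence $i\in R_q$ and $E_i'\subseteq\Supp(D_{q,\widetilde B'})$ for every $q\in\mathcal C_H$ whenever $\gamma_{c_i}\leq0$. For $\subseteq$, you use $U_{c_j,\widetilde B'}$ for points of $E_j'$ with $\gamma_{c_j}>0$, and $U_{q_1,\widetilde B'}$ with $\widetilde B'\in\mathcal B$ general for all other regular points; this works by base-point-freeness and because $\widetilde Q$ is a section. In the write-up, delete the abandoned Danielewski-chart detour, since only the support read off from \eqref{eq:D-general} matters. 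In case (iii), add one line noting that the remaining components $E_i$ ($i\in L_{c_j}$) and $E_k'$ ($k\in R_{c_j}$) meet $E_j'$ at most in the singular point.
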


\begin{proof}
If $\gamma_{c_i}\leq0$ and $q\in\mathcal C_H$, Lemma \ref{lem:positive-values} gives $q<c_i$. Hence, $E_i'$ is a component of every $D_{q,\widetilde B}$. If $\gamma_{c_i}>0$, choose $\widetilde B\in\mathcal B$. For $q=c_i$, neither $E_i$ nor $E_i'$ is a component of $D_{q,\widetilde B}$, and the defining conditions of $\mathcal B$ imply that neither $\widetilde B$ nor $\widetilde F_{\widetilde B}$ meets the smooth part of $\widetilde E_i'$. The remaining boundary components are exceptional curves over points on other fibers or strict transforms of fibers different from the fiber through $\overline{p}_i$; hence, they are also disjoint from the smooth part of $\widetilde E_i'$. Thus, we obtain $E_i'\cap(S_m^{m+2})_{\rm reg}\subseteq U_{c_i,\widetilde B}$.

Let $q_1\coloneqq\min_i c_i$. Lemma \ref{lem:positive-values} gives $q_1\in\mathcal C_H$, and $L_{q_1}=\emptyset$. Let $x\in(S_m^{m+2})_{\rm reg}$ lie outside every $E_i'$, and let $\widetilde x$ be its inverse image on $\widetilde S_m^{m+2}$. Since $|\widetilde B|$ is a base-point-free pencil, exactly one member contains $\widetilde x$.

Moreover, the maps
\[
|\widetilde B|\longrightarrow\widetilde Q,
\qquad
\widetilde B'\longmapsto\widetilde B'\cap\widetilde Q,
\]
and
\[
\widetilde Q\longrightarrow\P^1,
\qquad
z\longmapsto\text{the fiber of the original ruling through }z,
\]
are isomorphisms. Consequently, at most one member $\widetilde B$ satisfies $\widetilde x\in\widetilde F_{\widetilde B}$.
 Choose $\widetilde B\in\mathcal B$ outside these at most two members. Then $x\in U_{q_1,\widetilde B}$.
\end{proof}

The collection $\mathcal{U}_H$ is nonempty since $q_1\in \mathcal{C}_H$ and $\mathcal{B}\neq \emptyset$. Lemma \ref{lem:connected-transversal} shows that $\mathcal{U}_H$ is transversal.

\begin{lemma}\label{lem:no-divisor-middle}
There is no $\Q$-divisor $D\sim_{\Q}H$ with $\Supp(D)\subseteq S_m^{m+2}\setminus V_H$.
\end{lemma}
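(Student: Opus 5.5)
By Lemma \ref{lem:union-middle}, the complement $S_m^{m+2}\setminus V_H$ consists of the singular point together with the curves $E_i'$ for those indices $i$ with $\gamma_{c_i}\leq 0$. Set $J\coloneqq\{i\mid \gamma_{c_i}\leq0\}$. The singular point contributes no divisor. So any $\Q$-divisor $D$ with $\Supp(D)\subseteq S_m^{m+2}\setminus V_H$ has the form
\[
D=\sum_{i\in J}e_iE_i'
\]
with $e_i\in\Q$. The plan is to show that such a divisor can never be $\Q$-linearly equivalent to $H$.

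The cleanest route is to compare classes in $\Cl(S_m^{m+2})_{\Q}=\Q[F]\oplus\bigoplus_i\Q[E_i]$. Since $E_i'\sim_{\Q}F-E_i$, the divisor $D$ has class
\[
\Bigl(\sum_{i\in J}e_i\Bigr)F-\sum_{i\in J}e_iE_i .
\]
If $D\sim_{\Q}H$, then by \eqref{eq:Hclass} we get $e_i=c_i$ for $i\in J$, $c_i=0$ for $i\notin J$, and $a=\sum_{i\in J}c_i$. By \eqref{eq:basic-positive} every $c_i>0$, so $J$ must be all of $\{1,\dots,m+2\}$. Lemma \ref{lem:positive-values} shows $\gamma_{q_1}>0$, so the indices in $T_{q_1}$ are not in $J$. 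This is a contradiction.

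As a sanity check that does not depend on the second step, I would also verify $a\neq\sum_{i\in J}c_i$ directly. Take $i_0\notin J$. Then $\sum_{i\in J}c_i\leq\sum_{j\neq i_0}c_j<a$ by the computation of $\gamma_{q_1}$ in Lemma \ref{lem:positive-values}, which used $\widetilde\Gamma_{i}$ with $i\in T_{q_1}$; this requires $i_0\in T_{q_1}$, which we may choose. So two independent contradictions are available.

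The main obstacle is only bookkeeping: the complement also contains the singular point, which lies on every $E_i'$. Since it is a point, it supports no Weil divisor, and the classes $F,E_i$ form a basis of $\Cl(S_m^{m+2})_{\Q}$, so the coefficient comparison is legitimate. I also need to make sure that Lemma \ref{lem:union-middle} accounts for every curve in the complement, and it does, because it identifies the whole complement within the regular locus.
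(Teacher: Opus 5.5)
Your argument is correct and is essentially the paper's proof: both use Lemma \ref{lem:union-middle} to write $D$ as a combination of the curves $E_j'$ with $\gamma_{c_j}\leq 0$. Both then take an index $i_0\in T_{q_1}$, which satisfies $\gamma_{c_{i_0}}>0$ by Lemma \ref{lem:positive-values}, and observe that the $E_{i_0}$-coefficient is $0$ in $D$ but $-c_{i_0}\neq 0$ in $H$; your extra comparison of $F$-coefficients is also valid but redundant.
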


\begin{proof}
Every prime divisor contained in $S_m^{m+2}\setminus V_H$ is one of the curves $E_j'$ with $\gamma_{c_j}\leq0$. Hence, $D=\sum_{j\in I}d_jE_j'$ for some subset $I$ of these indices. Choose $i_0$ with $\gamma_{c_{i_0}}>0$; such an index exists by Lemma \ref{lem:positive-values}. Since $E_j'\sim_{\Q}F-E_j$, the coefficient of $E_{i_0}$ in $D$ is zero, whereas the coefficient of $E_{i_0}$ in $H$ is $-c_{i_0}\neq0$, a contradiction.
\end{proof}
\medskip

\noindent\textbf{The case $n=m+3$.}
Recall from Subsection \ref{subsect:danielewski} that
\[
\widetilde B \sim \widetilde Q+(m+1)\widetilde F-\sum_{i=1}^{m+3}\widetilde E_i.
\]
In this case, the corresponding linear system has a unique member, which we also denote by $\widetilde B$.

\begin{lemma}[\protect{\cite[Lemma 2.14]{CP20}}]\label{lem:plane-presentation}
For general points $\overline{p}_1,\dots,\overline{p}_{m+3}$, the curve $\widetilde B$ is a $(-1)$-curve. Contracting $\widetilde B,\widetilde E_1',\dots,\widetilde E_{m+3}'$ gives a morphism $\rho\colon\widetilde S_m^{m+3}\longrightarrow\P^2$. The image of $\widetilde Q$ is a smooth conic.
\end{lemma}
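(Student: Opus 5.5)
The plan is to reduce everything to intersection numbers on $\widetilde S_m^{m+3}$, whose Picard lattice is generated by $\widetilde Q,\widetilde F,\widetilde E_1,\dots,\widetilde E_{m+3}$ with $\widetilde Q^2=-m$, $\widetilde Q\cdot\widetilde F=1$, $\widetilde F^2=0$, $\widetilde E_i^2=-1$ and all other products zero. The canonical class is $K=-2\widetilde Q-(m+2)\widetilde F+\sum_i\widetilde E_i$.

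\textbf{Step 1: $\widetilde B$ is a $(-1)$-curve.} Compute
\[
\widetilde B^2=-m+2(m+1)-(m+3)=-1, \qquad K\cdot\widetilde B=-1,
\]
so $p_a(\widetilde B)=0$. On $\F_m$, $h^0(\overline Q+(m+1)\overline F)=m+4$, and $m+3$ general points impose independent conditions, so the system has exactly one member. For general points this member is irreducible and smooth. I would argue this by degenerating the points: the family of irreducible sections of class $\overline Q+(m+1)\overline F$ is $(m+4)-1$-dimensional and dominates $(\F_m)^{m+3}$ via choices of $m+3$ points on it. Hence for general points the unique member is an irreducible smooth rational section, and $\widetilde B$ is a $(-1)$-curve.

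\textbf{Step 2: the curves to contract are disjoint.} The curves $\widetilde E_i'\sim\widetilde F-\widetilde E_i$ are pairwise disjoint $(-1)$-curves. Moreover $\widetilde B\cdot\widetilde E_i'=(m+1)\cdot0+1-1=0$, using $\widetilde Q\cdot\widetilde F=1$: explicitly $\widetilde Q\cdot\widetilde F=1$, $(m+1)\widetilde F\cdot\widetilde F=0$, and $-\widetilde E_i\cdot(-\widetilde E_i)=-1$, which sums to $0$. So $\widetilde B,\widetilde E_1',\dots,\widetilde E_{m+3}'$ are $m+4$ disjoint $(-1)$-curves. Contracting them yields a smooth surface $Y$ with Picard number $(m+5)-(m+4)=1$ and $K_Y^2=K^2+m+4$. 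From $K^2=8-(m+3)=5-m$ we get $K_Y^2=9$. A smooth rational surface with Picard number one is $\P^2$, so $\rho$ exists.

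\textbf{Step 3: the image of $\widetilde Q$.} The pullback of a line class $\ell$ is determined by orthogonality to the contracted curves together with $\ell^2=1$. Taking $\ell=\widetilde Q+(m+2)\widetilde F-\sum\widetilde E_i-\widetilde B$ suggests the candidate
\[
\ell=\widetilde F+\widetilde B+\textstyle\sum_i \widetilde E_i' \cdot(\text{coefficients}),
\]
but it is simpler to use $-K=3\ell$. This gives $\ell=-K/3$ in the Picard group of $\P^2$ pulled back, and then
\[
\widetilde Q\cdot\rho^*\ell=\tfrac13(-K\cdot\widetilde Q)=\tfrac13(2m-m+2)\cdot\ldots
\]
To avoid the division, compute directly: $-K\cdot\widetilde Q=-2m+2+\ldots$, and adjunction gives $K\cdot\widetilde Q=m-2$. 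The degree is found as follows. Since $\widetilde Q$ meets $\widetilde B$ once and each $\widetilde E_i'$ once, its image has self-intersection $-m+1+(m+3)=4$, hence degree $2$. It is smooth because $\widetilde Q$ meets each contracted curve transversally at one point, and distinct contracted curves are disjoint, so no singularity is created.

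The main obstacle is Step 1, namely the irreducibility and smoothness of the unique member for general points. If the dominance argument proves awkward, the fallback is to exhibit one explicit configuration where the member is a smooth section and then invoke openness.
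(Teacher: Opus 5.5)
Your proof is correct. The paper gives no argument of its own for this lemma: it imports the statement from \cite[Lemma 2.14]{CP20} and uses it as a black box, so your proposal is a genuinely self-contained replacement.

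The route is the natural numerical one:
\begin{itemize}
\item $\widetilde B^2=K\cdot\widetilde B=-1$;
\item $\widetilde B,\widetilde E_1',\dots,\widetilde E_{m+3}'$ are $m+4$ pairwise disjoint $(-1)$-curves, so Castelnuovo's criterion together with Picard number one identifies the contraction with $\P^2$;
\item $\widetilde Q\cdot e=1$ for every contracted curve $e$ gives $\rho(\widetilde Q)^2=-m+(m+4)=4$, hence an irreducible, and therefore smooth, conic.
\end{itemize}
What your version buys over the citation is an explicit dictionary. In fact $L\sim\widetilde F+\widetilde B$ exactly: the unknown coefficients on the $\widetilde E_i'$ in your candidate are all zero. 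One checks $L^2=1$ and $L\cdot\widetilde B=L\cdot\widetilde E_i'=0$. From this, \eqref{eq:Q-plane} and $-K_{\widetilde S_m^{m+3}}=3L-\sum_{i=0}^{m+3}e_i$ follow by direct substitution, and $L\cdot\widetilde Q=2$ is immediate.

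There are two points to tighten.

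\textbf{Dominance in Step 1.} The dimension count alone does not give dominance. You also need that a general $(m+3)$-tuple of points on a smooth member $C$ lies on no other member. This holds because the restriction map $H^0\bigl(\F_m,\OO(\overline Q+(m+1)\overline F)\bigr)\to H^0\bigl(C,\OO_C(\overline Q+(m+1)\overline F)\bigr)\simeq H^0\bigl(\P^1,\OO_{\P^1}(m+2)\bigr)$ has one-dimensional kernel. Its image therefore has dimension $m+3$, so it is surjective, and $m+3$ general points of $\P^1$ impose independent conditions on $\OO_{\P^1}(m+2)$. The same fact shows that the unique member depends regularly on the points, which is exactly what makes your fallback (one good configuration plus openness) work. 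Note also that irreducibility already forces smoothness: an irreducible member $C$ satisfies $C\cdot\overline F=1$, so it is a section of $\tau$.

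\textbf{The line $\ell=-K/3$.} This is wrong as written on $\widetilde S_m^{m+3}$. There one has $-K_{\widetilde S_m^{m+3}}=3\rho^{*}\ell-\sum_{i=0}^{m+3}e_i$, so that formula would give $\widetilde Q\cdot\rho^{*}\ell=\tfrac13(2-m)$. It is harmless only because you abandon it in favour of the self-intersection computation; you should delete it.
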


Let $L$ denote the pullback of the class of a line in $\P^2$, and let $e_0,e_1,\dots,e_{m+3}$ denote the $\rho$-exceptional curves. We index them so that $e_0=\widetilde B$ and $e_i=\widetilde E_i'$ for $1\leq i\leq m+3$. Put $x_i\coloneqq\rho(e_i)$. Then the points $x_0,\dots,x_{m+3}$ lie on the conic $\rho(\widetilde Q)$, and
\begin{equation}\label{eq:Q-plane}
\widetilde Q
\sim
2L-\sum_{i=0}^{m+3}e_i.
\end{equation}
Write
\begin{equation}\label{eq:H-plane}
\pi^{\ast}H
\sim_{\Q}
b_HL-\sum_{i=0}^{m+3}\mu_i e_i.
\end{equation}
Equations \eqref{eq:Q-plane} and $\pi^{\ast}H\cdot\widetilde Q=0$ give
\begin{equation}\label{eq:plane-relations}
2b_H=\sum_{i=0}^{m+3}\mu_i.
\end{equation}
Moreover,
\begin{equation}\label{eq:plane-positive}
\mu_i=\pi^{\ast}H\cdot e_i>0,
\qquad
b_H-\mu_i-\mu_j
=\pi^{\ast}H\cdot(L-e_i-e_j)>0
\quad(i\neq j).
\end{equation}

Fix $r\in\{0,\dots,m+3\}$. For $j\neq r$,  let $\widetilde E_{rj}$ be the strict transform of the secant line through $x_r$ and $x_j$. Then $\widetilde E_{rj}\sim L-e_r-e_j$. The curves $\widetilde E_{rj}$, $j\neq r$, are pairwise disjoint $(-1)$-curves and satisfy $\widetilde Q\cdot\widetilde E_{rj}=0$. Let
\[
\varphi_r\colon\widetilde S_m^{m+3}\longrightarrow Z_r
\]
denote the contraction of these curves.

The pencil of lines through $x_r$ has reducible fibers $\widetilde E_{rj}+e_j$. After contracting the components $\widetilde E_{rj}$, the pencil induces a $\P^1$-bundle $\tau_r\colon Z_r\to \P^1$. Let $\overline{F}_r$ be the fiber class of $\tau_r$ and put $\widetilde F_r\coloneqq \varphi_r^{\ast}\overline{F}_r\sim L-e_r$ and $\overline{Q}_r\coloneqq (\varphi_r)_{\ast}(\widetilde Q)$. Then $\overline{Q}_r^2=-m$, $\overline{F}_r^2=0$, and $\overline{Q}_r\cdot\overline{F}_r=1$. Hence, $Z_r\simeq\F_m$. On $\widetilde S_m^{m+3}$, we have
\begin{equation}\label{eq:alternate-presentation}
e_j\sim \widetilde F_r-\widetilde E_{rj}
\quad(j\neq r),
\qquad
e_r\sim\widetilde Q+(m+1)\widetilde F_r-\sum_{j\neq r}\widetilde E_{rj}.
\end{equation}
Set $\alpha_r\coloneqq b_H-\mu_r$ and $c_{rj}\coloneqq b_H-\mu_r-\mu_j$ for $j\neq r$.
Using \eqref{eq:plane-relations} and \eqref{eq:alternate-presentation}, one obtains
\begin{equation}\label{eq:H-alternate}
\pi^{\ast}H
\sim_{\Q}
\alpha_r\widetilde Q+m\alpha_r\widetilde F_r-
\sum_{j\neq r}c_{rj}\widetilde E_{rj}.
\end{equation}
All $c_{rj}$ are positive by \eqref{eq:plane-positive}.

Choose distinct indices $u_r,v_r\neq r$ such that $\mu_{u_r}$ and $\mu_{v_r}$ are respectively the largest and second largest elements of the multiset $\{\mu_j\mid j\neq r\}$. Put
\[
\beta_r\coloneqq\mu_{v_r},
\qquad
q_r\coloneqq b_H-\mu_r-\beta_r,
\qquad
\gamma_r\coloneqq b_H-\mu_{u_r}-\mu_{v_r},
\]
and
\[
A_r\coloneqq\{j\neq r\mid\mu_j>\beta_r\},
\qquad
I_r^0\coloneqq\{j\neq r\mid\mu_j=\beta_r\},
\qquad
B_r\coloneqq\{j\neq r\mid\mu_j<\beta_r\}.
\]
Then $q_r,\gamma_r>0$. For every $j\neq r$,
\[
c_{rj}<q_r\Longleftrightarrow j\in A_r,
\qquad
c_{rj}=q_r\Longleftrightarrow j\in I_r^0,
\qquad
c_{rj}>q_r\Longleftrightarrow j\in B_r.
\]
Equation \eqref{eq:gamma-general} gives
\[
\gamma_{q_r}=m\alpha_r-
\sum_{j\in B_r}c_{rj}-
q_r\bigl(m+1-|B_r|\bigr)
=\gamma_r.
\]
Let $\overline{T}_r$ be the fiber of $\tau_r$ induced by the tangent line to $\rho(\widetilde Q)$ at $x_r$, let $\widetilde T_r\coloneqq \varphi_r^{\ast}\overline{T}_r$, and put $\Theta_r\coloneqq \pi_{\ast}(\widetilde T_r)$. The tangent line contains none of the points $x_j$ with $j\neq r$. Hence, $\overline{T}_r$ contains none of the blow-up centers $\varphi_r(\widetilde E_{rj})$. 

Apply Proposition \ref{prop:danielewski} to the birational morphism $\varphi_r\colon \widetilde S_m^{m+3}\to Z_r\simeq \F_m$, with section $e_r$, exceptional curves
$\widetilde E_{rj}$ for $j\neq r$, fiber class
$\widetilde F_r$, coefficient $m\alpha_r$ in place of $a$,
coefficients $c_{rj}$ in place of $c_j$, and $q=q_r$.
Put $C_i\coloneqq\pi_{\ast}(e_i)$, $L_{rj}\coloneqq\pi_{\ast}(\widetilde E_{rj})$. Then we have
\begin{equation}\label{eq:D-last}
D_r
\coloneqq
q_rC_r
+\sum_{j\in A_r}(\mu_j-\beta_r)L_{rj}
+\sum_{j\in B_r}(\beta_r-\mu_j)C_j
+\gamma_r\Theta_r
\sim_{\Q}H,
\qquad
U_r=S_m^{m+3}\setminus\Supp(D_r).
\end{equation}

The points $\varphi_r(\widetilde E_{rj})$ lie on pairwise distinct fibers of $\tau_r$. After identifying $\P^1\setminus \{\tau_r(\overline{T}_r)\}$ with $\A_x^1$, denote the coordinates of the points $\tau_r(\varphi_r(\widetilde E_{rj}))$ by $\lambda_{rj}$. Proposition \ref{prop:danielewski} gives
\[
U_r
\simeq
\Spec\frac{\Bbbk[x,u,v]}
{\left(uv-\prod_{j\in I_r^0}(x-\lambda_{rj})\right)}.
\]
Since $v_r\in I_r^0$, the set $I_r^0$ is nonempty. Hence, $U_r$ is a smooth flexible Danielewski surface.

Each $U_r$ is a nonempty smooth flexible affine open subset of the surface $S_m^{m+3}$. Therefore, Lemma \ref{lem:connected-transversal} shows that the collection $\{U_0,\dots,U_{m+3}\}$ is transversal.

We next identify the divisorial components that occur in the boundary of every open subset $U_r$.

\begin{lemma}\label{lem:intersection-supports}
Every divisorial component of
\[
S_m^{m+3}\setminus\bigcup_{r=0}^{m+3}U_r
\]
is one of the curves $C_i$. Moreover,
\[
C_i\subseteq\bigcap_{r=0}^{m+3}\Supp(D_r)
\quad\Longleftrightarrow\quad
\#\{j\mid\mu_j>\mu_i\}\geq3.
\]
Consequently, at least three of the curves $C_i$ are not contained in $\bigcap_{r=0}^{m+3}\Supp(D_r)$.
\end{lemma}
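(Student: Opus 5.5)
The plan is to compute, for each prime divisor, exactly which supports $\Supp(D_r)$ contain it. The complement $S_m^{m+3}\setminus\bigcup_rU_r$ equals $\bigcap_{r=0}^{m+3}\Supp(D_r)$. Its divisorial components are therefore exactly the prime divisors that are components of every $D_r$. By \eqref{eq:D-last}, the components of $D_r$ are $C_r$, the curves $L_{rj}$ for $j\in A_r$, the curves $C_j$ for $j\in B_r$, and $\Theta_r$. All their coefficients $q_r$, $\mu_j-\beta_r$, $\beta_r-\mu_j$ and $\gamma_r$ are positive.

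\textbf{Excluding the non-$C_i$ curves.} The key tool is to compare classes on $\widetilde S_m^{m+3}$ via $\rho$.
\begin{itemize}
\item The strict transform of $\Theta_r$ has class $L-e_r$, since the tangent line at $x_r$ passes through no other $x_j$. This class differs from every $e_i$, from every $L-e_s-e_j$, and from $L-e_s$ for $s\neq r$. Hence $\Theta_r$ is a component of $D_r$ only.
\item The curve $L_{rj}=L_{jr}$ is the image of the secant line through $x_r$ and $x_j$. It can occur only in $D_r$ (if $j\in A_r$) and in $D_j$ (if $r\in A_j$). Since there are $m+4\geq6$ indices, some $s\notin\{r,j\}$ exists, and $L_{rj}$ is not a component of $D_s$.
\end{itemize}
So only the curves $C_i$ can be common components.

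\textbf{The criterion for $C_i$.} The curve $C_i$ is a component of $D_i$, because $q_i>0$. For $r\neq i$, it is a component of $D_r$ if and only if $i\in B_r$, that is, $\mu_i<\beta_r$. Here $\beta_r$ is the second largest element of the multiset $\{\mu_j\mid j\neq r\}$. Put $N_i\coloneqq\#\{j\mid\mu_j>\mu_i\}$.
\begin{itemize}
\item If $N_i\geq3$, then for any $r\neq i$ at least two indices other than $r$ have $\mu_j>\mu_i$. Hence $\beta_r>\mu_i$ for every $r$.
\item If $N_i=2$, take $r$ to be one of the two larger indices. Among $j\neq r$ only one $\mu_j$ exceeds $\mu_i$, so $\beta_r\leq\mu_i$ and $C_i\not\subseteq\Supp(D_r)$.
\item If $N_i\leq1$, take any $r\neq i$ other than the possible larger index. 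Then at most one value exceeds $\mu_i$, and again $\beta_r\leq\mu_i$.
\end{itemize}
This proves the equivalence.

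\textbf{The final count.} Order the indices so that $\mu_{i_1}\geq\mu_{i_2}\geq\mu_{i_3}\geq\cdots$. Then $N_{i_k}\leq k-1\leq2$ for $k=1,2,3$. So $C_{i_1},C_{i_2},C_{i_3}$ are not common components.

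The main obstacle is the class bookkeeping in the exclusion step. One must check that the curves $\Theta_r$, $L_{rj}$ and $C_i$ are genuinely distinct prime divisors on $S_m^{m+3}$, and that none of them is secretly a component of another $D_s$. This rests on the lines being distinct and on the general-position assumptions from Lemma~\ref{lem:plane-presentation}: no three of the $x_i$ are collinear, and the tangent lines to $\rho(\widetilde Q)$ avoid the other points $x_j$. The rest is the elementary ranking argument above.
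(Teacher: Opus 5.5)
Your proposal is correct and follows the paper's proof: you list the components of each $\Supp(D_r)$, rule out $\Theta_r$ and $L_{rj}$ because each is a component of at most two of the $D_s$, and reduce the membership of $C_i$ to the inequalities $\mu_i<\beta_r$ for all $r\neq i$. Your class comparisons, the case analysis on $\#\{j\mid\mu_j>\mu_i\}$ and the explicit final count fill in steps the paper leaves implicit. One small correction: the facts you attribute to general position (no three of the $x_i$ collinear, and the tangent line at $x_r$ avoiding the other $x_j$) are automatic, because the $x_i$ are distinct points of the smooth conic $\rho(\widetilde Q)$.
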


\begin{proof}
Equation \eqref{eq:D-last} gives
\[
\Supp(D_r)
=C_r\cup \Theta_r
\cup\bigcup_{j\in A_r}L_{rj}
\cup\bigcup_{j\in B_r}C_j.
\]
The curves $\Theta_r$ are pairwise distinct and are distinct from every curve $L_{ij}$. For $s\notin\{r,j\}$, the curve $L_{rj}$ is not a component of $D_s$. Such an index $s$ exists since $m+4\geq6$. Therefore, only the curves $C_i$ can be contained in all the supports $\Supp(D_r)$.

For $r\neq i$,
\[
C_i\subseteq\Supp(D_r)
\quad\Longleftrightarrow\quad
\mu_i<\beta_r.
\]
The inequalities $\mu_i<\beta_r$ for all $r\neq i$ hold
exactly when at least three indices $j$ satisfy
$\mu_j>\mu_i$.
\end{proof}

\begin{lemma}\label{lem:no-divisor-last}
There is no $\Q$-divisor $D\sim_{\Q}H$ with
\[
\Supp(D)
\subseteq
S_m^{m+3}\setminus\bigcup_{r=0}^{m+3}U_r.
\]
\end{lemma}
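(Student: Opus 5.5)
By Lemma \ref{lem:intersection-supports}, every prime divisor contained in $S_m^{m+3}\setminus\bigcup_rU_r$ is one of the curves $C_i$ with $\#\{j\mid\mu_j>\mu_i\}\geq3$. Let $J$ be the set of such indices. Then any candidate has the form $D=\sum_{i\in J}d_iC_i$ with $d_i\in\Q$. I would argue by contradiction: suppose $D\sim_{\Q}H$, and compare classes on $\widetilde S_m^{m+3}$ in the basis $L,e_0,\dots,e_{m+3}$.

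First I would pull back to the resolution. Since $C_i=\pi_{\ast}(e_i)$ and $e_i\cdot\widetilde Q=1$, we get $\pi^{\ast}C_i=e_i+\frac1m\widetilde Q$. Hence
\[
\pi^{\ast}D\sim_{\Q}\sum_{i\in J}d_ie_i+\frac{\sum_{i\in J}d_i}{m}\widetilde Q .
\]
By \eqref{eq:Q-plane}, this equals $\frac{s}{m}\bigl(2L-\sum_{k}e_k\bigr)+\sum_{i\in J}d_ie_i$, where $s\coloneqq\sum_{i\in J}d_i$.

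Next I would compare this with \eqref{eq:H-plane}. The coefficient of $L$ gives $b_H=2s/m$. For an index $k\notin J$, the coefficient of $e_k$ gives $\mu_k=s/m$. For $i\in J$, it gives $\mu_i=s/m-d_i$.

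The key point is to show that $J^c$ has at least three elements, which is the last assertion of Lemma \ref{lem:intersection-supports}. Then there are three indices $k$ with $\mu_k=s/m=b_H/2$. Taking two of them, $k\neq k'$, gives $b_H-\mu_k-\mu_{k'}=0$, which contradicts \eqref{eq:plane-positive}. This single contradiction already suffices, and it requires only two indices outside $J$.

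The only step needing care is the pullback formula and the bookkeeping of the $\widetilde Q$-term. The condition $\pi^{\ast}H\cdot\widetilde Q=0$ is automatically consistent, because the $\widetilde Q$-adjustment was chosen exactly to make the pullback orthogonal to $\widetilde Q$. So I expect no real obstacle beyond checking that the coefficient of $e_k$ for $k\notin J$ is exactly $s/m$. If one prefers to avoid pullbacks, an alternative is to intersect $D$ and $H$ with the curves $e_k$, $k\notin J$, on the resolution. This gives $\pi^{\ast}H\cdot e_k=\mu_k$, while $\pi^{\ast}D\cdot e_k=\frac1m s$, since $e_i\cdot e_k=0$ for $i\neq k$. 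It also gives $\pi^{\ast}H\cdot(L-e_k-e_{k'})=b_H-\mu_k-\mu_{k'}>0$, whereas $\pi^{\ast}D\cdot(L-e_k-e_{k'})=\frac{s}{m}(2-1-1)=0$ for $k,k'\notin J$. That intersection-number version is the cleanest route, and I would write the proof that way.
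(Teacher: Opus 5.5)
Your proposal is correct and is essentially the paper's proof: pull back using $\pi^{\ast}C_i=e_i+\frac1m\widetilde Q$, read off $b_H=2s/m$ and $\mu_k=s/m$ for $k\notin J$, and use two of the at least three indices outside $J$ supplied by Lemma \ref{lem:intersection-supports} to contradict \eqref{eq:plane-positive}. Your intersection-number variant is the same computation in dual form: it says $H$ would have degree zero on the secant-line curve $\pi_{\ast}(\widetilde E_{kk'})$, which is disjoint from $\Supp(D)$. Either write-up works.
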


\begin{proof}
By Lemma \ref{lem:intersection-supports}, write $D=\sum_{i\in I}a_iC_i$ and set $s\coloneqq\sum_{i\in I}a_i$. Since we have $\pi^{\ast}C_i=e_i+\frac1m\widetilde Q$, \eqref{eq:H-plane} gives $b_H=\frac{2s}{m}$ and $\mu_j=\frac{s}{m}=\frac {b_H}{2}$ for $j\notin I$.
By Lemma \ref{lem:intersection-supports}, the complement of $I$ contains at least three indices. For distinct $j,k\notin I$, one obtains $\pi^{\ast}H\cdot(L-e_j-e_k)=b_H-\mu_j-\mu_k=0$, contrary to \eqref{eq:plane-positive}.
\end{proof}

\begin{proof}[Proof of Theorem \ref{thm:main}]
Assume first that $0\leq n\leq m+1$. The affine open subsets constructed above are smooth, flexible, and $H$-polar, and they cover $(S_m^n)_{\rm reg}$. Proposition \ref{prop:flex-cover} therefore implies that $\Affcone_H(S_m^n)$ is flexible. This proves assertion \textup{(1)}.

Assume next that $n=m+2$. The collection $\mathcal{U}_H$ consists of smooth flexible $H$-polar affine open subsets. The assumptions of Corollary \ref{cor:open-orbit} follow from Proposition \ref{prop:danielewski} and Lemma \ref{lem:no-divisor-middle}. Hence, $\Affcone_H(S_m^{m+2})$ has an open $\SAut$-orbit in $\bigl(\Affcone_H(S_m^{m+2})\bigr)_{\rm reg}$.
This proves assertion \textup{(2)}.

Assume finally that $n=m+3$. The collection $\{U_0,\dots,U_{m+3}\}$ consists of smooth flexible $H$-polar affine open subsets. The assumptions of Corollary \ref{cor:open-orbit} follow from the construction of the $U_r$ and Lemma \ref{lem:no-divisor-last}. Hence, $\Affcone_H(S_m^{m+3})$ has an open $\SAut$-orbit in $\bigl(\Affcone_H(S_m^{m+3})\bigr)_{\rm reg}$. This proves assertion \textup{(3)} and completes the proof.
\end{proof}


\bibliographystyle{habbvr}
\bibliography{biblio}

\end{document}